\newcommand{\vertiii}[1]{{\left\vert\kern-0.25ex\left\vert\kern-0.25ex\left\vert #1\right\vert\kern-0.25ex\right\vert\kern-0.25ex\right\vert}}
\begin{document}

\title{An Artificial Compressibility Ensemble Timestepping Algorithm for Flow Problems}
\author{J. A. Fiordilino\thanks{The authors' research were partially supported by NSF grants DMS 1522267 and CBET 1609120.  The first author is also supported by the DoD SMART Scholarship.} \and M. McLaughlin\footnotemark[1]}
\date{Updated: 9/5/17}
\maketitle

\begin{abstract}
	Ensemble calculations are essential for systems with uncertain data but require substantial increase in computational resources.  This increase severely limits ensemble size.  To reach beyond current limits, we present a first-order artificial compressibility ensemble algorithm.  This algorithm effectively decouples the velocity and pressure solve via artificial compression, thereby reducing computational complexity and execution time.  Further reductions in storage and computation time are achieved via a splitting of the convective term.  Nonlinear energy stability and first-order convergence of the method are proven under a CFL-type condition involving fluctuations of the velocity.  Numerical tests are provided which confirm the theoretical analyses and illustrate the value of ensemble calculations.
\end{abstract}
\section{Introduction}
The data in physical applications, initial conditions, forcings, and parameters, are never known exactly due to fundamental uncertainty in measurement devices.  The growth of this uncertainty can seriously degrade solution quality.  Ensemble calculations improve solution quality; in particular, the ensemble average is the most likely solution and its variance provides an estimate of prediction reliability.  Typically, computing a solution ensemble involves either J sequential fine mesh runs or J parallel coarse mesh runs of a given code subject to perturbed data.  This leads to the fundamental question: \textbf{Can we increase ensemble size without decreasing mesh density (and vice versa) on a fully utilized computer system?}

Recent breakthroughs in ensemble timestepping algorithms \cite{Fiordilino,Fiordilino3,Gunzburger,Gunzburger2,Jiang,Jiang2,Jiang3,Jiang4,Moheb,Takhirov} reduce memory requirements and computational costs for ensemble simulations.  The same general pattern is followed in each of these works: decomposition of parameters and/or convective velocity into ensemble mean and fluctuating components followed by an IMEX discretization.  The resulting linear systems share the same coefficient matrix, reducing storage and computation time.  Although these works represent a significant advance, there is a need for more efficient algorithms due to ensemble size and resolution demands.  New methodologies must be applied to reach further.  Moreover, we are interested in algorithms with efficiency gains even for an ensemble size of one.

One possible entry point is the saddle point structure.  Operator splitting \cite{Glowinski,Massarotti,Yanenko}, artificial compressibility \cite{Chorin,Decaria,Guermond,Rong,Shen,Shen2,Temam}, and projection methods \cite{Guermond2,Prohl}, among others, address this.  Artificial compressibility, in particular, decouples the velocity and pressure solves, decreasing storage and complexity and increasing speed of computation.

The algorithm presented herein combines two effective tactics for reducing storage requirements and computation time: decoupling velocity, pressure, and temperature solves and keeping the coefficient matrix, at each timestep, constant for each ensemble member.  A CFL-type condition is introduced, which causes breakdown near and into turbulent flow regimes.  A turbulence model should be incorporated into the algorithm, in this event, and is under study.  Consequently, the focus of this paper is on laminar flow.

Consider natural convection within an enclosed cavity with zero wall thickness, e.g. see Figure \ref{figure=domain}.  Let $\Omega \subset \mathbb{R}^d$ (d=2,3) be a convex polyhedral domain with boundary $\partial \Omega$.  The boundary is partitioned such that $\partial \Omega = \overline{\Gamma_{1}}  \cup \overline{\Gamma_{2}}$ with $\Gamma_{1} \cap \Gamma_{2} =\emptyset$, $|\Gamma_{1}| > 0$, $\Gamma_{1} = \Gamma_{H} \cup \Gamma_{N}$, and dist$(\overline{\Gamma_{H}},\overline{\Gamma_{N}})>0$; that is, the boundary is decomposed into a Neumann and Dirichlet part and the Dirichlet boundary is further decomposed into positively separated homogeneous and non-homogeneous parts.  Given $u(x,0;\omega_{j}) = u^{0}(x;\omega_{j})$ and $T(x,0;\omega_{j}) = T^{0}(x;\omega_{j})$ for $j = 1,2, ... , J$, let $u(x,t;\omega_{j}):\Omega \times (0,t^{\ast}] \rightarrow \mathbb{R}^{d}$, $p(x,t;\omega_{j}):\Omega \times (0,t^{\ast}] \rightarrow \mathbb{R}$, and $T(x,t;\omega_{j}):\Omega \times (0,t^{\ast}] \rightarrow \mathbb{R}$ satisfy
\begin{align}
u_{t} + u \cdot \nabla u -Pr \Delta u + \nabla p = PrRa\xi T + f \; \; in \; \Omega, \label{boussinesqv} \\
\nabla \cdot u = 0 \; \; in \; \Omega,  \label{boussinesqm}\\
T_{t} + u \cdot \nabla T - \Delta T = g \; \; in \; \Omega, \label{boussinesqt} \\
u = 0 \; \; on \; \partial \Omega,  \; \; \;
T = 1 \; \; on \; \Gamma_{N}, \; \; \;
T = 0 \; \; on \; \Gamma_{H}, \; \; \;
n \cdot \nabla T = 0 \; \; on \; \Gamma_{2}, \label{boussinesqbcs}
\end{align}
\noindent where $n$ denotes the usual outward normal, $\xi$ denotes the unit vector in the direction of gravity, $Pr$ is the Prandtl number, and $Ra$ is the Rayleigh number.  Further, $f$ and $g$ are body forces and heat sources, respectively.  

Let $<u>^{n} := \frac{1}{J} \sum_{j=1}^{J} u(x,t^{n};\omega_{j})$ and ${u'}^{n} = u(x,t^{n};\omega_{j}) - <u>^{n}$ be the ensemble average and fluctuation. Suppress the spatial discretization for the moment to present the main idea.  We apply an implicit-explicit (IMEX) time-discretization to the system (\ref{boussinesqv}) - (\ref{boussinesqbcs}) such that the resulting coefficient matrix is independent of the ensemble members.  Moreover, we penalize mass conservation by adding a discretized version of the penalty term $\epsilon p_{t}$.  This leads to the artificial compressibility ensemble (ACE) timestepping method:
\begin{align}
\frac{u^{n+1} - u^{n}}{\Delta t} + <u>^{n} \cdot \nabla u^{n+1} + {u'}^{n} \cdot \nabla u^{n} - Pr \Delta u^{n+1} + \nabla p^{n+1} = PrRa\xi T^{n} + f^{n+1}, \label{d1} \\
\epsilon \frac{p^{n+1} - p^{n}}{\Delta t} + \nabla \cdot u^{n+1} = 0, \label{d1m}\\
\frac{T^{n+1} - T^{n}}{\Delta t} + <u>^{n} \cdot \nabla T^{n+1} + {u'}^{n} \cdot \nabla T^{n} - \Delta T^{n+1} = g^{n+1}. \label{d2}
\end{align}
The treatment of the nonlinear terms, $u \cdot \nabla u$ and $u \cdot \nabla T$, leads to a \textit{shared} coefficient matrix, in the above, independent of the ensemble members.  The nonlinear term is the \textit{source} of ensemble dependence in the coefficient matrix.  In particular, using (\ref{d1m}) in (\ref{d1}) and rearranging, 
the following system must be solved, for each $j$:
\begin{align}\label{matrixvel}
\Big(\frac{1}{\Delta t}I + <u>^{n}\cdot \nabla + Pr \Delta + \frac{\Delta t}{\epsilon}\nabla \nabla \cdot \Big)u^{n+1}  = RHS_{u},\\
p^{n+1} = \frac{\Delta t}{\epsilon} \nabla \cdot u^{n+1} + p^{n},\\
\Big(\frac{1}{\Delta t}I + <u>^{n}\cdot \nabla + \Delta \Big) T^{n+1} = RHS_{T}.\label{matrixtemp}
\end{align}
It is clear that the velocity, pressure, and temperature solves are fully decoupled; the saddle-point problem is replaced with a convection-diffusion problem with grad-div stabilization followed by algebraic pressure update, at each timestep.  After finite element spatial discretization, the matrix associated with $<u>^{n} \cdot \nabla$ is independent of the ensemble member due to using the ensemble average as the convective velocity.  The structure of these systems can be exploited with efficient block solvers for linear systems with multiple right-hand-sides; for example, block LU factorizations \cite{Demmel}, block GMRES \cite{Jbilou}, and block BiCGSTAB \cite{El Guennouni}, among others.

 \begin{figure}
 	\centering
 	\includegraphics[height=2.25in, keepaspectratio]{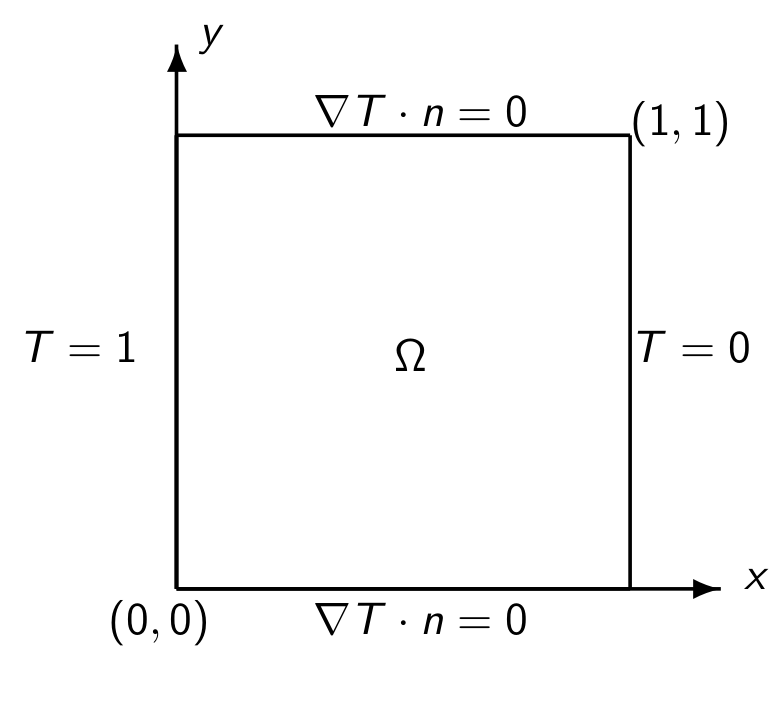}
 	\caption{Domain and boundary conditions for double pane window problem benchmark.}\label{figure=domain}
 \end{figure}

In Section 2, we collect necessary mathematical tools.  In Section 3, we present an algorithm based on (\ref{d1}) - (\ref{d2}) in the context of the finite element method.  Stability and error analysis of the algorithm follow in Section 4.  In particular, under a CFL-type condition, we prove nonlinear energy stability of the proposed algorithm in Theorem \ref{t1} and first-order convergence in Theorem \ref{t2}.   Numerical experiments follow in Section 5 illustrating first-order convergence, speed advantages, and usefulness of ensembles in the context of fluid flow problems.  We end with conclusions in Section 6.
\section{Mathematical Preliminaries}
The $L^{2} (\Omega)$ inner product is $(\cdot , \cdot)$ and the induced norm is $\| \cdot \|$.  Define the Hilbert spaces,
\begin{align*}
X &:= H^{1}_{0}(\Omega)^{d} = \{ v \in H^{1}(\Omega)^d : v = 0 \; on \; \partial \Omega \}, \;
Q := L^{2}_{0}(\Omega) = \{ q \in L^{2}(\Omega) : (1,q) = 0 \}, \\
W &:= H^{1}(\Omega), \; W_{\Gamma_{1}}:= \{ S \in W : S = 0 \; on \; \Gamma_{1} \}, \;
V := \{ v \in X : (q,\nabla \cdot v) = 0 \; \forall \; q \in Q \},
\end{align*}
and $H^{1}(\Omega)$ norm $\| \cdot \|_{1}$.  The dual norm $\| \cdot \|_{-1}$ is understood to correspond to either $X$ or $W_{\Gamma_{1}}$.

We will utilize the fractional order Hilbert space on the non-homogeneous Dirichlet boundary $H^{1/2}(\Gamma_{N})$ with corresponding norm
$$\| R \|_{1/2,\Gamma_{N}} := \Big(\int_{\Gamma_{N}} | R(s) |^{2} ds + \int_{\Gamma_{N}} \int_{\Gamma_{N}} \frac{|R(s)-R(s')|^{2}}{|s-s'|^{d}}dsds'\Big)^{1/2}.$$
Further, let $\tau:\Omega \rightarrow \mathbb{R}$ be an extension of $T\lvert_{\Gamma_{N}} = 1$ into the domain such that $\| \tau\|_{1} \leq C_{tr} \| 1 \|_{1/2,\Gamma_{N}} = C_{tr} \lvert \Gamma_{N}\rvert^{1/2}$ for some $C_{tr}>0$.

\noindent \textbf{Remark:}  The linear conduction profile $\tau(x) = 1 - x_{1}$ for natural convection within a unit square or cube with a pair of differentially heated vertical walls, is such an extension.  It satisfies: $\| \tau\|_{1} \leq \frac{2\sqrt{3}}{3}$; see Lemma 3.2 on p. 1832 of \cite{Colmenares} and references therein for more general domains.

The explicitly skew-symmetric trilinear forms are denoted:
\begin{align*}
b(u,v,w) &= \frac{1}{2} (u \cdot \nabla v, w) - \frac{1}{2} (u \cdot \nabla w, v) \; \; \; \forall u,v,w \in X, \\
b^{\ast}(u,T,S) &= \frac{1}{2} (u \cdot \nabla T, S) - \frac{1}{2} (u \cdot \nabla S, T) \; \; \; \forall u \in X, \; \forall T,S \in W.
\end{align*}
\noindent They enjoy the following continuity properties.
\begin{lemma} \label{l1}
There are constants $C_{1}, C_{2}, C_{3}, C_{4}, C_{5},$ and $C_{6}$ such that for all u,v,w $\in$ X and T,S $\in W$, $b(u,v,w)$ and $b^{\ast}(u,T,S)$ satisfy
\begin{align*}
b(u,v,w) &= (u \cdot \nabla v, w) + \frac{1}{2} ((\nabla \cdot u)v, w), \\
b^{\ast}(u,T,S) &= (u \cdot \nabla T, S) + \frac{1}{2} ((\nabla \cdot u)T, S), \\
b(u,v,w) &\leq C_{1} \| \nabla u \| \| \nabla v \| \| \nabla w \|, \\
b(u,v,w) &\leq C_{2} \sqrt{\| u \| \| \nabla u \|} \| \nabla v \| \| \nabla w \|, \\
b^{\ast}(u,T,S) &\leq C_{3} \| \nabla u \| \| \nabla T \| \| \nabla S \|,\\
b^{\ast}(u,T,S) &\leq C_{4} \sqrt{\| u \| \| \nabla u \|} \| \nabla T \| \| \nabla S \|,\\
b(u,v,w) &\leq C_{5} \| \nabla u \| \| \nabla v \| \sqrt{\| w \| \| \nabla w \|}, \\
b^{\ast}(u,T,S) &\leq C_{6} \| \nabla u \| \| \nabla T \| \sqrt{\| S \| \| \nabla S \|}.
\end{align*}
\begin{proof}
See Lemma 2.1 on p. 12 of \cite{Temam}.
\end{proof}
\end{lemma}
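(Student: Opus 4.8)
The plan is to establish the two algebraic identities first by integration by parts, and then to derive the six continuity bounds from H\"older's inequality combined with the Sobolev embedding $H^{1}\hookrightarrow L^{6}$ (for $d=3$; $H^{1}\hookrightarrow L^{p}$, $p<\infty$, for $d=2$) and the Ladyzhenskaya interpolation inequalities. For the first identity I would start from the definition of $b$ and integrate by parts in the term $(u\cdot\nabla w, v)$, writing it componentwise and moving the derivative off $w$ to obtain
\begin{equation*}
(u\cdot\nabla w, v) = -((\nabla\cdot u)v, w) - (u\cdot\nabla v, w) + \int_{\partial\Omega}(u\cdot n)\,v\,w\,ds.
\end{equation*}
Since $u\in X = H^{1}_{0}(\Omega)^{d}$ vanishes on $\partial\Omega$, the boundary integral drops out. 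Substituting this back into $b(u,v,w)=\tfrac12(u\cdot\nabla v,w)-\tfrac12(u\cdot\nabla w,v)$ and collecting terms yields $b(u,v,w)=(u\cdot\nabla v,w)+\tfrac12((\nabla\cdot u)v,w)$. The identity for $b^{\ast}$ follows from the identical computation with $v,w$ replaced by $T,S\in W$; the only point requiring a moment of care is that $W=H^{1}(\Omega)$ imposes no boundary condition, but the boundary term still vanishes because the factor $u\cdot n$ is zero on all of $\partial\Omega$.

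For the continuity estimates I would bound each term in the definition (equivalently, in the identity form) by H\"older's inequality, distributing the three exponents so that $\tfrac1p+\tfrac1q+\tfrac1r=1$, and then converting each factor into an $H^{1}$-type quantity. For bound $(C_{1})$, taking $u\in L^{6}$, $\nabla v\in L^{2}$, $w\in L^{3}$ and using $\|u\|_{L^{6}}\le C\|\nabla u\|$ together with $\|w\|_{L^{3}}\le C\|\nabla w\|$ produces the product $\|\nabla u\|\,\|\nabla v\|\,\|\nabla w\|$. The bound $(C_{3})$ for $b^{\ast}$ follows from the same template. The sharper bounds carrying a half-power come from replacing one full $H^{1}$ factor by the Ladyzhenskaya inequality $\|\phi\|_{L^{3}}\le C\|\phi\|^{1/2}\|\nabla\phi\|^{1/2}$ (in $d=3$; $\|\phi\|_{L^{4}}\le C\|\phi\|^{1/2}\|\nabla\phi\|^{1/2}$ in $d=2$): for $(C_{2})$ I would apply it to the $u$-factor, obtaining $\sqrt{\|u\|\,\|\nabla u\|}$ in place of $\|\nabla u\|$, and analogously for $(C_{4})$.

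The hard part, and the reason the identity form is genuinely needed, is the pair of bounds $(C_{5})$ and $(C_{6})$, in which the weak norm $\sqrt{\|w\|\,\|\nabla w\|}$ (respectively $\sqrt{\|S\|\,\|\nabla S\|}$) must sit on the \emph{last} argument. In the raw definition this argument appears under a gradient in the term $(u\cdot\nabla w,v)$, so a naive H\"older estimate can only extract the full norm $\|\nabla w\|$ and not the interpolated half-power. I would instead begin from the already-proven identity $b(u,v,w)=(u\cdot\nabla v,w)+\tfrac12((\nabla\cdot u)v,w)$, in which every derivative falls on $v$ or $u$ while $w$ appears undifferentiated. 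Estimating the first term by $\|u\|_{L^{6}}\|\nabla v\|\|w\|_{L^{3}}$ and the second by $\|\nabla\cdot u\|\,\|v\|_{L^{6}}\|w\|_{L^{3}}$, then applying Ladyzhenskaya to the $w$-factor and Sobolev to $u$ and $v$, produces exactly $\|\nabla u\|\,\|\nabla v\|\sqrt{\|w\|\,\|\nabla w\|}$; the parallel computation for $b^{\ast}$ gives $(C_{6})$.

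All six constants $C_{1},\dots,C_{6}$ are products of the relevant Sobolev embedding and Ladyzhenskaya interpolation constants and therefore depend only on $\Omega$ and the dimension $d$. I do not anticipate any difficulty beyond bookkeeping once the identity form is in hand, which is precisely why I would prove the two identities before attempting the weak-norm bounds.
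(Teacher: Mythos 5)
Your argument is correct and is, in substance, the proof the paper is pointing at: the paper gives no argument of its own here, deferring entirely to Lemma~2.1 of the cited Temam reference, and what you have written out (integration by parts with vanishing boundary term for the two identities; H\"older plus the Sobolev embedding $H^{1}\hookrightarrow L^{6}$ and the interpolation inequality $\|\phi\|_{L^{3}}\le C\|\phi\|^{1/2}\|\nabla\phi\|^{1/2}$ for the bounds, switching to the identity form so that the half-power lands on the undifferentiated last argument for $C_{5}$ and $C_{6}$) is exactly the standard argument contained in that citation. One caveat, which you inherit from the lemma's statement rather than introduce yourself: the steps of the form $\|S\|_{L^{3}}\le C\|\nabla S\|$ require a Poincar\'e--Friedrichs inequality, which holds on $X=H^{1}_{0}(\Omega)^{d}$ and on $W_{\Gamma_{1}}$ but not on all of $W=H^{1}(\Omega)$ --- a nonzero constant $T$ has $\nabla T=0$ while $b^{\ast}(u,T,S)$ need not vanish --- so the $b^{\ast}$ estimates should be read on $W_{\Gamma_{1}}$ (as they are in fact used in the analysis) or with full $H^{1}$ norms; your proof goes through verbatim in that setting.
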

The weak formulation of system (\ref{boussinesqv}) - (\ref{boussinesqbcs}) is:
Find $u:[0,t^{\ast}] \rightarrow X$, $p:[0,t^{\ast}] \rightarrow Q$, $T:[0,t^{\ast}] \rightarrow W$ for a.e. $t \in (0,t^{\ast}]$ satisfying for $j = 1,...,J$:
\begin{align}
({u}_{t},v) + b(u,u,v) + Pr(\nabla u,\nabla v) - (p, \nabla \cdot v) &= PrRa(\xi T,v) + (f,v) \; \; \forall v \in X, \\
(\nabla \cdot u,q) &= 0 \; \; \forall q \in Q, \\
({T}_{t},S) + b^{\ast}(u,T,S) + (\nabla T,\nabla S) &= (g,S) \; \; \forall S \in W_{\Gamma_{1}}.
\end{align}
\subsection{Finite Element Preliminaries}
Consider a quasi-uniform mesh $\Omega_{h} = \{K\}$ of $\Omega$ with maximum triangle diameter length $h$.  Let $X_{h} \subset X$, $Q_{h} \subset Q$, $\hat{W_{h}} = (W_{h},W_{\Gamma_{1},h}) \subset (W,W_{\Gamma_{1}}) = \hat{W}$ be conforming finite element spaces consisting of continuous piecewise polynomials of degrees \textit{j}, \textit{l}, and \textit{j}, respectively.  Moreover, assume they satisfy the following approximation properties $\forall 1 \leq j,l \leq k,m$:
\begin{align}
\inf_{v_{h} \in X_{h}} \Big\{ \| u - v_{h} \| + h\| \nabla (u - v_{h}) \| \Big\} &\leq Ch^{k+1} \lvert u \rvert_{k+1}, \label{a1}\\
\inf_{q_{h} \in Q_{h}}  \| p - q_{h} \| &\leq Ch^{m} \lvert p \rvert_{m}, \label{a2}\\
\inf_{S_{h} \in \hat{W_{h}}}  \Big\{ \| T - S_{h} \| + h\| \nabla (T - S_{h}) \| \Big\} &\leq Ch^{k+1} \lvert T \rvert_{k+1}, \label{a3}
\end{align}
for all $u \in X \cap H^{k+1}(\Omega)^{d}$, $p \in Q \cap H^{m}(\Omega)$, and $T \in \hat{W} \cap H^{k+1}(\Omega)$.  Furthermore, we consider those spaces for which the discrete inf-sup condition is satisfied,
\begin{equation} \label{infsup} 
\inf_{q_{h} \in Q_{h}} \sup_{v_{h} \in X_{h}} \frac{(q_{h}, \nabla \cdot v_{h})}{\| q_{h} \| \| \nabla v_{h} \|} \geq \beta > 0,
\end{equation}
\noindent where $\beta$ is independent of $h$.  Examples include the MINI-element and Taylor-Hood family of elements \cite{John}.  
\\ \indent The Stokes projection will be vital in the upcoming error analysis.  Let $I^{Stokes}_{h}:V \times Q \rightarrow X_{h} \times Q_{h}$ via $I^{Stokes}_{h}(u,p) = (U,P)$ satisfy the following discrete Stokes problem:
\begin{align*}
Pr(\nabla (U - u), \nabla v_{h}) - (P-p, \nabla \cdot v_{h}) = 0 \; \forall \; v_{h} \in X_{h}, \\
(\nabla \cdot (U - u),q_{h}) = 0 \; \forall \; q_{h} \in Q_{h}.
\end{align*}
The following result holds.
\begin{lemma} \label{l2}
Assume the approximation properties \ref{a1}-\ref{a2} and associated regularity hold.  Then, there exists $C>0$ such that
\begin{align*}
h^{-1}\| u - U \| + \| \nabla (u - U) \| + \| p - P \| &\leq C(\beta,Pr,\Omega) \Big\{\inf_{v_{h} \in X_{h}} \| \nabla (u - v_{h}) \| + \inf_{q_{h} \in Q_{h}}  \| p - q_{h} \|\Big\}.
\end{align*}
\end{lemma}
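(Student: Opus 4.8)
The plan is to treat this as a standard Stokes-projection estimate: I first prove the energy-norm bounds on $\|\nabla(u-U)\|$ and $\|p-P\|$ by a C\'ea-type argument built on the inf-sup condition \eqref{infsup}, and then upgrade to the $L^2$ velocity bound (the $h^{-1}$-weighted term) by an Aubin--Nitsche duality argument requiring elliptic regularity. Throughout I write $e := u-U$ and $\eta := p-P$; the two defining equations of $I^{Stokes}_h$ are precisely the Galerkin orthogonality relations $Pr(\nabla e,\nabla v_h) - (\eta,\nabla\cdot v_h) = 0$ for all $v_h\in X_h$ and $(\nabla\cdot e,q_h)=0$ for all $q_h\in Q_h$, which drive the whole argument.

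For the energy estimate, I fix arbitrary $v_h\in X_h$, $q_h\in Q_h$ and split $e = (u-v_h)-(U-v_h) =: \lambda - \phi_h$ and $\eta = (p-q_h)-(P-q_h) =: \mu - \psi_h$, with $\phi_h\in X_h$, $\psi_h\in Q_h$. Testing the first orthogonality relation with $\phi_h$, and using the second with $q_h=\psi_h$ to replace $(\psi_h,\nabla\cdot\phi_h)$ by $(\psi_h,\nabla\cdot\lambda)$, yields the identity $Pr\|\nabla\phi_h\|^2 = Pr(\nabla\lambda,\nabla\phi_h) - (\mu,\nabla\cdot\phi_h) + (\psi_h,\nabla\cdot\lambda)$. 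To control $\|\psi_h\|$ I invoke \eqref{infsup}: choosing a near-optimal $w_h$ and rewriting $(\psi_h,\nabla\cdot w_h)$ via the first orthogonality relation gives $\beta\|\psi_h\| \le Pr\|\nabla\phi_h\| + C(\|\nabla\lambda\| + \|\mu\|)$. Substituting this back and applying Young's inequality to absorb the $\|\nabla\phi_h\|$ terms produces $\|\nabla\phi_h\| \le C(\beta,Pr)(\|\nabla\lambda\| + \|\mu\|)$; the triangle inequality together with the inf-sup bound on $\psi_h$ then gives $\|\nabla e\| + \|\eta\| \le C(\|\nabla\lambda\| + \|\mu\|)$, and taking the infimum over $v_h,q_h$ delivers the gradient and pressure parts of the claim.

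For the $L^2$ velocity bound I use duality. Let $(\Phi,\Xi)\in X\times Q$ solve the dual Stokes problem $-Pr\Delta\Phi + \nabla\Xi = e$, $\nabla\cdot\Phi = 0$, with $\Phi|_{\partial\Omega}=0$; since $\Omega$ is convex polyhedral, elliptic regularity gives $\|\Phi\|_2 + \|\Xi\|_1 \le C\|e\|$. Testing the weak dual problem with $v=e$ gives $\|e\|^2 = Pr(\nabla\Phi,\nabla e) - (\Xi,\nabla\cdot e)$. Introducing interpolants $\Phi_h\in X_h$, $\Xi_h\in Q_h$ of $\Phi,\Xi$, subtracting both Galerkin orthogonality relations, and using the dual incompressibility $\nabla\cdot\Phi=0$ to rewrite $(\eta,\nabla\cdot\Phi_h)$, all fully discrete cross terms cancel and I am left with $\|e\|^2 = Pr(\nabla(\Phi-\Phi_h),\nabla e) - (\eta,\nabla\cdot(\Phi-\Phi_h)) - (\Xi-\Xi_h,\nabla\cdot e)$. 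Bounding each factor by Cauchy--Schwarz, using the interpolation estimates $\|\nabla(\Phi-\Phi_h)\| \le Ch\|\Phi\|_2$ and $\|\Xi-\Xi_h\| \le Ch\|\Xi\|_1$ from \eqref{a1}--\eqref{a2}, and then the regularity bound, yields $\|e\| \le Ch(\|\nabla e\| + \|\eta\|)$. Combining with the energy estimate and dividing by $h$ produces the $h^{-1}\|u-U\|$ term, completing the proof.

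The main obstacle is the duality step: it hinges on full $H^2\times H^1$ elliptic regularity for the dual Stokes operator, which is exactly what the convexity of $\Omega$ guarantees, and on the careful cancellation of the discrete terms via both Galerkin orthogonality relations together with $\nabla\cdot\Phi=0$. The energy estimate is comparatively routine; its one delicate point is the use of \eqref{infsup} to bound $\|\psi_h\|$ with a constant independent of $h$, without which the pressure error could not be controlled.
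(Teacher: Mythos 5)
Your proposal is correct and follows exactly the route the paper indicates: the energy-norm bound on $\|\nabla(u-U)\|$ and $\|p-P\|$ via Galerkin orthogonality and the inf-sup condition is precisely the content of the cited Theorem 13 of Layton, and your duality step is the ``Aubin--Nitsche trick'' the paper invokes for the $h^{-1}\|u-U\|$ term, with the convexity of $\Omega$ correctly identified as the source of the needed $H^2\times H^1$ regularity for the dual Stokes problem. You have simply written out in full the argument the paper leaves to the references.
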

\begin{proof}
	Follows from Theorem 13 on p. 62 of \cite{Layton} and the Aubin-Nitsche trick.
\end{proof}
We will also assume that the finite element spaces satisfy the standard inverse inequality \cite{Ern}:
$$\| \nabla \chi_{1,2} \| \leq C_{inv,1,2} h^{-1} \| \chi_{1,2} \| \; \; \; \forall \chi_{1} \in X_{h}, \; \forall \chi_{2} \in W_{\Gamma_{1},h},$$ 
\noindent where $C_{inv,1,2}$ depend on the minimum angle $\alpha_{min}$ in the triangulation.  A discrete Gronwall inequality will play a  role in the upcoming analysis.
\begin{lemma} \label{l3}
(Discrete Gronwall Lemma). Let $\Delta t$, H, $a_{n}$, $b_{n}$, $c_{n}$, and $d_{n}$ be finite nonnegative numbers for n $\geq$ 0 such that for N $\geq$ 1
\begin{align*}
a_{N} + \Delta t \sum^{N}_{0}b_{n} &\leq \Delta t \sum^{N-1}_{0} d_{n}a_{n} + \Delta t \sum^{N}_{0} c_{n} + H,
\end{align*}
then for all  $\Delta t > 0$ and N $\geq$ 1
\begin{align*}
a_{N} + \Delta t \sum^{N}_{0}b_{n} &\leq exp\big(\Delta t \sum^{N-1}_{0} d_{n}\big)\big(\Delta t \sum^{N}_{0} c_{n} + H\big).
\end{align*}
\end{lemma}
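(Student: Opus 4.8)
The plan is to collapse the hypothesis into a self-referential inequality for a single quantity and then close it by induction on $N$, the only analytic input being the elementary bound $1+x \le e^{x}$. First I would set $G_{N} := a_{N} + \Delta t \sum_{0}^{N} b_{n}$ and $B_{N} := \Delta t \sum_{0}^{N} c_{n} + H$. Because every $b_{n} \ge 0$ we have $a_{n} \le G_{n}$, and because every $c_{n},H \ge 0$ the forcing $B_{N}$ is nondecreasing in $N$. Substituting $a_{n} \le G_{n}$ into the assumed inequality converts it into the cleaner recursion $G_{N} \le \Delta t \sum_{0}^{N-1} d_{n} G_{n} + B_{N}$, in which the unknown $G_{N}$ appears on the right only through the earlier indices $n \le N-1$.

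Next I would prove the target estimate $G_{N} \le \exp(\Delta t \sum_{0}^{N-1} d_{n})\, B_{N}$ by induction on $N$. The anchor is the lowest index, where the damping sum is empty, $\exp(0)=1$, and the claimed bound reduces to the hypothesis itself (this also supplies $a_{0} \le B_{0} \le B_{N}$, the small amount of bookkeeping needed to start the recursion). For the inductive step I would insert the inductive hypothesis $G_{n} \le \exp(\Delta t \sum_{0}^{n-1} d_{m})\, B_{n}$ for each $n \le N-1$ into the recursion and use the monotonicity $B_{n} \le B_{N}$ to factor $B_{N}$ out of every term. The entire step then reduces to the purely numerical inequality
\[
1 + \Delta t \sum_{n=0}^{N-1} d_{n} \exp\Big(\Delta t \sum_{m=0}^{n-1} d_{m}\Big) \le \exp\Big(\Delta t \sum_{n=0}^{N-1} d_{n}\Big).
\]

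That scalar inequality is the technical heart, and I would establish it by a second, nested induction. Writing $P_{n} := \Delta t \sum_{m=0}^{n-1} d_{m}$, so that $P_{n+1} = P_{n} + \Delta t\, d_{n}$, the claim reads $1 + \sum_{n=0}^{N-1} \Delta t\, d_{n}\, e^{P_{n}} \le e^{P_{N}}$. Advancing from $N$ to $N+1$ adds $\Delta t\, d_{N} e^{P_{N}}$ to the left-hand side, yielding at most $e^{P_{N}}(1 + \Delta t\, d_{N}) \le e^{P_{N}} e^{\Delta t\, d_{N}} = e^{P_{N+1}}$, where the single inequality used is $1+x \le e^{x}$ for $x \ge 0$.

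The point I would emphasize is that \emph{no} restriction on the step size $\Delta t$ is required, and this is a direct consequence of the damping sum in the hypothesis terminating at $N-1$: since $a_{N}$ never appears on the right-hand side, there is no term to absorb into the left and hence no need for a smallness condition such as $\Delta t\, d_{N} < 1$ that would arise in a fully implicit variant. Beyond the empty-sum conventions at the base case and the consistent use of the monotonicity of $B_{N}$, the argument is entirely elementary, so I do not anticipate a genuine obstacle; the only place demanding care is the nested induction for the displayed scalar bound.
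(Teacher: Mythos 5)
Your proof is correct, but it is worth noting that the paper does not actually prove this lemma at all: its ``proof'' is a one-line citation to Lemma 5.1 of Heywood--Rannacher. What you have written is the standard self-contained argument for the explicit (lagged) variant of the discrete Gronwall inequality: reduce to the recursion $G_{N} \leq \Delta t \sum_{0}^{N-1} d_{n} G_{n} + B_{N}$, then close it by a double induction whose only analytic ingredient is $1+x \leq e^{x}$. Your remark about why no smallness condition on $\Delta t\, d_{n}$ is needed is exactly the right observation --- it is the point of the explicit variant as opposed to the implicit one in Heywood--Rannacher, where the sum on the right runs to $N$ and one must assume $\Delta t\, d_{n} < 1$ and pay a factor $(1-\Delta t\, d_{n})^{-1}$ in the exponent. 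One caveat: your base case quietly uses the hypothesis at the lowest index $N=0$ (with the empty damping sum) to obtain $a_{0} \leq B_{0}$, whereas the lemma as stated in the paper only assumes the inequality for $N \geq 1$ and places no constraint whatsoever on $a_{0}$. As literally written the lemma is then false (take $b_{n}=c_{n}=0$, $d_{0}=1$, $a_{0}$ huge, and $a_{N}=\Delta t\, a_{0}+H$ for $N\geq 1$: the hypothesis holds for all $N\geq 1$ but the conclusion fails at $N=1$). This is a defect of the statement rather than of your argument --- the standard formulation assumes the inequality for all $N \geq 0$ --- but you should state explicitly that your induction anchors at $N=0$ and therefore requires that case of the hypothesis.
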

\begin{proof}
	See Lemma 5.1 on p. 369 of \cite{Heywood}.
\end{proof}
Lastly, the discrete time analysis will utilize the following norms $\forall \; -1 \leq k < \infty$:
\begin{align*}
\vertiii{v}_{\infty,k} &:= \max_{1\leq n \leq N} \| v^{n} \|_{k}, \;
\vertiii{v}_{p,k} := \big(\Delta t \sum^{N}_{n = 1} \| v^{n} \|^{p}_{k}\big)^{1/p}.
\end{align*}
\section{Numerical Scheme}
Denote the fully discrete solutions by $u^{n}_{h}$, $p^{n}_{h}$, and $T^{n}_{h}$ at time levels $t^{n} = n\Delta t$, $n = 0,1,...,N$, and $t^{\ast}=N\Delta t$.  For every $n = 0,1,...,N-1$, the fully discrete approximation of (\ref{boussinesqv}) - (\ref{boussinesqbcs}) is:
\\ \underline{Algorithm: ACE}
\\ \textbf{Step one:}  Given $(u^{n}_{h}, p^{n}_{h}, T^{n}_{h})$ $\in (X_{h},Q_{h},W_{h})$, find $(u^{n+1}_{h},T^{n+1}_{h})$ $\in (X_{h},W_{h})$ satisfying
\begin{multline}\label{scheme:one:velocity}
(\frac{u^{n+1}_{h} - u^{n}_{h}}{\Delta t},v_{h}) + b(<u_{h}>^{n},u^{n+1}_{h},v_{h}) + b({u'}^{n}_{h},u^{n}_{h},v_{h}) \\ + Pr(\nabla u^{n+1}_{h},\nabla v_{h}) + \frac{\Delta t}{\epsilon} (\nabla \cdot u^{n+1}_{h}, \nabla \cdot v_{h}) - (p^{n}_{h}, \nabla \cdot v_{h}) =  PrRa(\xi T^{n}_{h},v_{h}) + (f^{n+1},v_{h}) \; \; \forall v_{h} \in X_{h},
\end{multline}
\begin{multline}\label{scheme:one:temperature}
(\frac{T^{n+1}_{h} - T^{n}_{h}}{\Delta t},S_{h}) + b^{\ast}(<u_{h}>^{n},T^{n+1}_{h},S_{h}) + b^{\ast}({u'}^{n}_{h},T^{n}_{h},S_{h}) \\ + (\nabla T^{n+1}_{h},\nabla S_{h})  = (g^{n+1},S_{h}) \; \; \forall S_{h} \in W_{\Gamma_{1},h}.
\end{multline}
\\ \textbf{Step two:} Given $p^{n}_{h} \in Q_{h}$, find $p^{n+1}_{h} \in Q_{h}$ satisfying
\begin{equation} \label{scheme:one:pressure}
p^{n+1}_{h} = p^{n}_{h} - \frac{\Delta t}{\epsilon} \nabla \cdot u^{n+1}.
\end{equation}
\textbf{Remark:} This is a consistent first-order approximation provided $\epsilon = \mathcal{O}(\Delta t^{l+1})$ for $l\geq 0$.  However, the condition number of the resulting system grows without bound as $\Delta t \rightarrow 0$ when $l\geq 1$.
\section{Numerical Analysis of the Ensemble Algorithm}\label{NA}
We present stability results for the aforementioned algorithm under the following timestep condition:
\begin{align}
\frac{C_{\dagger} \Delta t}{h} \max_{1 \leq j \leq J} \|\nabla {u'}^{n}_{h}\|^{2} \leq 1,\label{c1}
\end{align}
\noindent where $C_{\dagger}\equiv C_{\dagger}(\Omega, \alpha_{min},Pr)$.  In the laminar flow regime, condition (\ref{c1}) performs better than conditions appearing in typical explicit methods, where $\|\nabla u_{h}\|$ is present, since $\|\nabla {u'}_{h}\|$ is smaller.

\indent For the artificial compressibility parameter, we prescribe the following $\mathcal{O}(\Delta t)$ relationship, for clarity:
\begin{align}\label{c2}
\epsilon = \gamma^{-1}\Delta t,
\end{align}
\noindent where $\gamma > 0$ is an arbitrary parameter.  Consequently, we have $\frac{\Delta t}{\epsilon} (\nabla \cdot u^{n+1}_{h}, \nabla \cdot v_{h}) = \gamma (\nabla \cdot u^{n+1}_{h}, \nabla \cdot v_{h})$ in equation (\ref{scheme:one:velocity}).  Evidently, the ACE algorithm introduces grad-div stabilization, which is known to have a positive impact on solution quality.  Proper selection of the grad-div parameter $\gamma$ can vary wildly; see e.g. \cite{Jenkins} and references therein.  Further, modest to large values of $\gamma$ are known to dramatically slow down iterative solvers.  Consequently, appropriate choice of $\epsilon$ will vary with application and should be chosen with care.

The remainder of Section \ref{NA} is as follows.  Under condition (\ref{c1}), ACE (\ref{scheme:one:velocity}) - (\ref{scheme:one:temperature}) is proven to be convergent with first-order accuracy in Theorem \ref{t2}.  Nonlinear, energy, stability of the velocity, temperature, and pressure approximations are proven in Theorem \ref{t1}.  Two stability theorems (Theorems \ref{t1b} and \ref{t1c}) are then stated which treat special cases where improvements can be made.
\subsection{Stability Analysis}
\begin{theorem} \label{t1}
Suppose $f \in L^{2}(0,t^{\ast};H^{-1}(\Omega)^{d})$, $g \in L^{2}(0,t^{\ast};H^{-1}(\Omega))$.  If the scheme (\ref{scheme:one:velocity}) - (\ref{scheme:one:temperature}) satisfies condition (\ref{c1}), then
\begin{multline}
\frac{1}{2}\|T^{N}_{h}\|^{2} + \|u^{N}_{h}\|^{2} + \epsilon \|p^{N}_{h}\|^{2} + \frac{1}{2}\sum_{n = 0}^{N-1} \Big(\|T^{n+1}_{h} - T^{n}_{h}\|^{2} + \|u^{n+1}_{h} - u^{n}_{h}\|^{2} + 2\epsilon \|p^{n+1}_{h} - p^{n}_{h}\|^{2}\Big)
\\ + \frac{1}{4} \vertiii{\nabla T_{h}}^{2}_{2,0} + \frac{Pr}{2} \vertiii{\nabla u_{h}}^{2}_{2,0} \leq exp( C_{\#} t^{\ast}) \Big\{\frac{3}{Pr}\vertiii{f}^{2}_{2,-1} + 4\vertiii{g}^{2}_{2,-1} + 4 C_{I}^{2} C{tr}^{2} |\Gamma_{N}| t^{\ast} 
\\ + 3PrRa^{2}C_{PF,1}^2C_{I}^{2}C{tr}^{2} |\Gamma_{N}| t^{\ast} + 2\|T^{0}_{h}\|^{2} + \|u^{0}_{h}\|^{2} + \epsilon \|p^{0}_{h}\|^{2}\Big\} + C^{2}_{I}C^{2}_{tr}|\Gamma_{N}|\big(1 + \frac{t^{\ast}}{2} + 2exp( C_{\#} t^{\ast}) \big).
\end{multline}
\end{theorem}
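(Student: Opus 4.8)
The plan is to derive a discrete energy inequality by testing each equation with the current unknown, summing over time levels, and closing with the discrete Gronwall lemma. First I would set $v_h = 2\Delta t\, u^{n+1}_h$ in (\ref{scheme:one:velocity}) and apply the polarization identity $(a-b,a) = \frac{1}{2}(\|a\|^2 - \|b\|^2 + \|a-b\|^2)$ to the discrete time derivative, producing $\|u^{n+1}_h\|^2 - \|u^n_h\|^2 + \|u^{n+1}_h - u^n_h\|^2$ together with the dissipation $2Pr\Delta t\|\nabla u^{n+1}_h\|^2$ and the grad-div term. The implicitly-treated mean convection $b(<u_h>^n, u^{n+1}_h, u^{n+1}_h)$ vanishes by skew-symmetry. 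For the pressure I would exploit the artificial compressibility relation (\ref{scheme:one:pressure}) by substituting $\nabla\cdot u^{n+1}_h = -\frac{\epsilon}{\Delta t}(p^{n+1}_h - p^n_h)$ into the combined grad-div and pressure terms $\frac{\Delta t}{\epsilon}\|\nabla\cdot u^{n+1}_h\|^2 - (p^n_h, \nabla\cdot u^{n+1}_h)$; after a second polarization this collapses to the telescoping quantity $\epsilon(\|p^{n+1}_h\|^2 - \|p^n_h\|^2) + \epsilon\|p^{n+1}_h - p^n_h\|^2$, matching the pressure terms on the left-hand side of the theorem.

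The crux is the explicitly-treated fluctuation term $b({u'}^n_h, u^n_h, u^{n+1}_h)$. Writing $u^{n+1}_h = u^n_h + (u^{n+1}_h - u^n_h)$ and using skew-symmetry to annihilate $b({u'}^n_h, u^n_h, u^n_h)$ leaves $b({u'}^n_h, u^n_h, u^{n+1}_h - u^n_h)$. I would bound this with the continuity estimate $b(u,v,w)\le C_{5}\|\nabla u\|\|\nabla v\|\sqrt{\|w\|\|\nabla w\|}$ from Lemma \ref{l1}, then apply the inverse inequality to $\|\nabla(u^{n+1}_h - u^n_h)\|$ so the difference appears in $L^2$, yielding a factor $h^{-1/2}\|u^{n+1}_h - u^n_h\|$. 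A Young's inequality splits off $\frac{1}{2}\|u^{n+1}_h - u^n_h\|^2$ (absorbed by the time-derivative term) and a remainder proportional to $\frac{\Delta t}{h}\|\nabla {u'}^n_h\|^2\|\nabla u^n_h\|^2$. This is exactly where the CFL-type condition (\ref{c1}) enters: it bounds $\frac{\Delta t}{h}\|\nabla {u'}^n_h\|^2$ by a constant, so with $C_{\dagger}$ chosen large enough the remainder is controlled by the velocity dissipation (up to an index shift handled in the summation). The temperature equation is treated analogously, reducing $b^{\ast}({u'}^n_h, T^n_h, \cdot)$ by the same splitting. I expect this absorption---balancing the explicit fluctuation against the numerical dissipation under (\ref{c1})---to be the main obstacle and the heart of the argument.

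To handle the nonhomogeneous temperature boundary condition I would decompose $T^{n+1}_h = \phi^{n+1}_h + \tau_h$, where $\tau_h \in W_h$ is a finite element interpolant of the extension $\tau$ with $\|\nabla\tau_h\| \le C_{I}C_{tr}|\Gamma_N|^{1/2}$, and $\phi^{n+1}_h = T^{n+1}_h - \tau_h \in W_{\Gamma_1,h}$ is an admissible test function; testing (\ref{scheme:one:temperature}) with $S_h = \phi^{n+1}_h$ and using time-independence of the boundary data gives $T^{n+1}_h - T^n_h = \phi^{n+1}_h - \phi^n_h$, so the difference norms are unchanged. The extension-bearing terms---the diffusion cross term $(\nabla\tau_h, \nabla\phi^{n+1}_h)$, the mean-convection $b^{\ast}(<u_h>^n, \tau_h, \phi^{n+1}_h)$, and the buoyancy coupling $PrRa(\xi T^n_h, u^{n+1}_h)$---are bounded by Cauchy--Schwarz, the Poincar\'e--Friedrichs inequality, and Young's inequality. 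For the buoyancy term I would use $\|T^n_h\| \le C_{PF,1}\|\nabla\phi^n_h\| + \|\tau_h\|$, absorbing the $\|\nabla u^{n+1}_h\|$ factor into the velocity dissipation while the $\|\phi^n_h\|^2$ piece feeds Gronwall and the $\|\tau_h\|^2 \le C_{I}^2 C_{tr}^2|\Gamma_N|$ piece produces the constant $3PrRa^2 C_{PF,1}^2 C_{I}^2 C_{tr}^2|\Gamma_N|t^{\ast}$ after summation; this compatibility of the $\|\nabla u^{n+1}_h\|$ absorption forces the $\|u\|^2$ versus $\frac{1}{2}\|T\|^2$ weighting seen in the statement. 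The mean-convection of $\tau_h$ is handled by trading a portion of the temperature dissipation $\|\nabla\phi^{n+1}_h\|^2$ against $\|\nabla\tau_h\|^2\|\nabla<u_h>^n\|^2$, the latter controlled through $\|\nabla<u_h>^n\|^2 \le \frac{1}{J}\sum_{j}\|\nabla u^n_h(\omega_j)\|^2$ and the velocity dissipation; together these generate the $C_{I}^2 C_{tr}^2|\Gamma_N|t^{\ast}$ contributions.

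Finally, I would add the velocity and temperature inequalities, bound the forcings $(f^{n+1}, u^{n+1}_h)$ and $(g^{n+1}, \phi^{n+1}_h)$ via the dual norm and Young's inequality to produce $\frac{3}{Pr}\vertiii{f}^2_{2,-1}$ and $4\vertiii{g}^2_{2,-1}$, and sum over $n = 0,\dots,N-1$ so the energy and pressure contributions telescope to $\|u^N_h\|^2$, $\frac{1}{2}\|T^N_h\|^2$, and $\epsilon\|p^N_h\|^2$. The residual energy terms retained at level $n$ furnish the Gronwall data $d_n$, $c_n$, and $H$ required by Lemma \ref{l3}, whose application yields the $\exp(C_{\#}t^{\ast})$ prefactor. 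Converting the final estimate from $\phi^N_h$ back to $T^N_h$ via $T^N_h = \phi^N_h + \tau_h$ and Young's inequality produces the additive term $C_{I}^2 C_{tr}^2|\Gamma_N|(1 + \frac{t^{\ast}}{2} + 2\exp(C_{\#}t^{\ast}))$, completing the bound.
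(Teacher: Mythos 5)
Your overall architecture coincides with the paper's: the same test functions $(2\Delta t\, u^{n+1}_h,\,2\Delta t\, p^{n+1}_h,\,2\Delta t\, \theta^{n+1}_h)$, the same collapse of the grad-div and pressure terms to $\epsilon\big(\|p^{n+1}_h\|^2-\|p^{n}_h\|^2+\|p^{n+1}_h-p^{n}_h\|^2\big)$, the same lift $T^{n}_h=\theta^{n}_h+I_h\tau$, and, most importantly, the same mechanism for the explicitly treated fluctuation terms: skew-symmetry to place the increment $u^{n+1}_h-u^{n}_h$ (resp.\ $\theta^{n+1}_h-\theta^{n}_h$) in the test-function slot, the $\sqrt{\|w\|\|\nabla w\|}$ continuity bound of Lemma \ref{l1}, the inverse inequality, Young, and absorption under (\ref{c1}). (The paper decomposes the \emph{transported} argument rather than the test argument, so its leftover gradient sits at level $n+1$ and is absorbed on the spot; your variant leaves $\|\nabla u^{n}_h\|^2$ and needs the index shift you mention plus an extra $\Delta t\,\|\nabla u^{0}_h\|^2$ data term --- harmless.)

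The genuine gap is in how you route the lower-order coupling terms. For the convection of the lift you propose to bound $b^{\ast}(u^{n}_h,\tau_h,\phi^{n+1}_h)\le C_3\|\nabla u^{n}_h\|\,\|\nabla\tau_h\|\,\|\nabla\phi^{n+1}_h\|$ and then absorb \emph{both} gradient factors into the temperature and velocity dissipation. Splitting by Young gives $\delta\Delta t\|\nabla\phi^{n+1}_h\|^2+\delta^{-1}C_3^2\|\nabla\tau_h\|^2\Delta t\|\nabla u^{n}_h\|^2$; absorbing the second piece into $Pr\Delta t\|\nabla u^{n}_h\|^2$ while keeping $\delta<2$ forces $C_3^2C_I^2C_{tr}^2|\Gamma_N|\lesssim Pr$, a smallness condition on the data that is not among the hypotheses (and such an absorption produces no additive $C_I^2C_{tr}^2|\Gamma_N|t^{\ast}$ term, contrary to what you claim it generates). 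The paper instead estimates this term so that the velocity enters through $\|u^{n}_h\|^2$ in $L^2$ --- a quantity already controlled on the left at each step --- and feeds it to the discrete Gronwall lemma; that is precisely why $C_{\#}$ contains $2C_I^2C_{tr}^2|\Gamma_N|(1+C_{PF_2}^2)$. The same problem afflicts your buoyancy estimate as written: applying Poincar\'e--Friedrichs to $T^{n}_h$ produces $\|\nabla\phi^{n}_h\|^2$, which can neither be absorbed into the temperature dissipation without a smallness restriction on $Ra$ (fatal for the $Ra\le 10^6$ regime targeted) nor be placed in the Gronwall term $\Delta t\sum d_na_n$ of Lemma \ref{l3}, since the gradient appears on the left only inside the time-summed dissipation $b_n$, not as an $a_n$. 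The correct move --- which your prose half-describes but your displayed inequality contradicts --- is to leave $\|\theta^{n}_h\|$ in $L^2$, apply Poincar\'e--Friedrichs to $u^{n+1}_h$, absorb $\frac{Pr\Delta t}{3}\|\nabla u^{n+1}_h\|^2$, and Gronwall $3PrRa^2C_{PF,1}^2\|\theta^{n}_h\|^2$. Until these two terms are re-routed through $L^2$ norms and Gronwall, the argument does not close under the stated hypotheses.
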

\begin{proof}
Let $T^{n}_{h} = \theta^{n}_{h} + I_{h}\tau$, where $I_{h}\tau \in W_{h}$ is an interpolant of $\tau$ satisfying $\|I_{h}\tau\|_{1} \leq C_{I} \|\tau\|_{1}$.  We will need the following variational form of equation (\ref{scheme:one:pressure}),
\begin{equation} \label{scheme:one:pressure2}
\epsilon (\frac{p^{n+1}_{h} -p^{n}_{h}}{\Delta t},q_{h}) + (\nabla \cdot u^{n+1}_{h},q_{h}) = 0 \; \; \forall q_{h} \in Q_{h}.
\end{equation}
Use equation (\ref{scheme:one:pressure}) in equation (\ref{scheme:one:velocity}) and add equations (\ref{scheme:one:velocity}), (\ref{scheme:one:temperature}), and (\ref{scheme:one:pressure2}).  Let $(v_{h},q_{h},S_{h})$ \\ $= (2\Delta t u^{n+1}_{h},2\Delta t p^{n+1}_{h},2\Delta t \theta^{n+1}_{h}) \in (V_{h},Q_{h},W_{\Gamma_{1},h})$ and use the polarization identity.  Rearranging yields
\begin{multline}\label{stability:thicku}
\Big\{\|\theta^{n+1}_{h}\|^{2} - \|\theta^{n}_{h}\|  + \|\theta^{n+1}_{h} - \theta^{n}_{h}\|^{2}\Big\} + \Big\{\|u^{n+1}_{h}\|^{2} - \|u^{n}_{h}\|  + \|u^{n+1}_{h} - u^{n}_{h}\|^{2}\Big\}
\\ + \epsilon \Big\{\|p^{n+1}_{h}\|^{2} - \|p^{n}_{h}\|  + \|p^{n+1}_{h} - p^{n}_{h}\|^{2}\Big\} + 2\Delta t \|\nabla \theta^{n+1}_{h}\|^{2} + 2 Pr \Delta t  \|\nabla u^{n+1}_{h}\|^{2} = - 2\Delta t (\nabla I_{h}\tau,\nabla \theta^{n+1}_{h})
\\ - 2\Delta t b^{\ast}({u'}^{n}_{h},\theta^{n}_{h},\theta^{n+1}_{h}) - 2\Delta t b^{\ast}(u^{n}_{h},I_{h}\tau,\theta^{n+1}_{h}) - 2\Delta t b({u'}^{n}_{h},u^{n}_{h},u^{n+1}_{h}) + 2PrRa \Delta t (\xi (\theta^{n}_{h} + I_{h}\tau), u^{n+1}_{h})
\\ + 2\Delta t (f^{n+1},u^{n+1}_{h}) + 2\Delta t (g^{n+1},\theta^{n+1}_{h}).
\end{multline}
Consider $-2\Delta t (\nabla I_{h}\tau,\nabla \theta^{n+1}_{h})$ and $2\Delta t PrRa(\xi I_{h}\tau, u^{n+1}_{h})$.  Use the Cauchy-Schwarz-Young inequality and interpolant estimates on both as well as Poincar\'{e}-Friedrichs on the second,
\begin{align}
-2\Delta t (\nabla I_{h}\tau,\nabla \theta^{n+1}_{h}) &\leq {4\Delta t} \|I_{h}\tau\|^{2}_{1} + \frac{\Delta t}{4} \|\nabla \theta^{n+1}_{h}\|^{2} \leq {4 C_{I}^{2} \Delta t} \|\tau\|^{2}_{1} + \frac{\Delta t}{4} \|\nabla \theta^{n+1}_{h}\|^{2}\label{stability:thick:estinttau}
\\ &\leq {4 C_{I}^{2} C{tr}^{2} |\Gamma_{N}| \Delta t} + \frac{\Delta t}{4} \|\nabla \theta^{n+1}_{h}\|^{2}, \notag
\\ 2\Delta t PrRa(\xi I_{h}\tau, u^{n+1}_{h}) &\leq {3\Delta t PrRa^{2}C_{PF,1}^2C_{I}^{2}C{tr}^{2} |\Gamma_{N}|} + \frac{Pr\Delta t}{3} \| \nabla u^{n+1}_{h} \|^{2}. \label{stability:thick:esttau}
\end{align}
Use the dual norm estimate and Young's inequality on both $2\Delta t (g^{n+1},\theta^{n+1}_{h})$ and $2\Delta t (f^{n+1},u^{n+1}_{h})$.  Cauchy-Schwarz-Young and Poincar\'{e}-Friedrichs inequalities on $2\Delta t PrRa(\xi \theta^{n}_{h}, u^{n+1}_{h})$ yield
\begin{align}
2\Delta t (g^{n+1},\theta^{n+1}_{h}) &\leq {4\Delta t} \|g^{n+1}\|^{2}_{-1} + \frac{\Delta t}{4} \|\nabla \theta^{n+1}_{h}\|^{2}, \label{stability:thick:estg}
\\ 2\Delta t PrRa(\xi \theta^{n}_{h}, u^{n+1}_{h}) &\leq {3\Delta t PrRa^{2}C_{PF,1}^2} \|\theta^{n}_{h}\|^{2} + \frac{Pr\Delta t}{3} \| \nabla u^{n+1}_{h} \|^{2}, \label{stability:thick:estT}
\\ 2\Delta t (f^{n+1},u^{n+1}_{h}) &\leq \frac{3\Delta t}{Pr} \|f^{n+1} \|^{2}_{-1} + \frac{Pr\Delta t}{3} \| \nabla u^{n+1}_{h} \|^{2}. \label{stability:thick:estf}
\end{align}
Consider $-2\Delta t b^{\ast}({u'}^{n}_{h},\theta^{n}_{h},\theta^{n+1}_{h})$ and $2\Delta t b({u'}^{n}_{h},u^{n}_{h},u^{n+1}_{h})$.  Use skew-symmetry, Lemma \ref{l1}, the inverse inequality, and the Cauchy-Schwarz-Young inequality.  Then,
\begin{align}
-2\Delta t b^{\ast}({u'}^{n}_{h},\theta^{n}_{h},\theta^{n+1}_{h}) &= 2\Delta t b^{\ast}({u'}^{n}_{h},\theta^{n+1}_{h},\theta^{n+1}_{h} - \theta^{n}_{h})  \label{stability:thick:estbstar}\\
&\leq 2\Delta t C_{6} \|\nabla {u'}^{n}_{h}\| \|\nabla \theta^{n+1}_{h}\| \sqrt{\|\theta^{n+1}_{h} - \theta^{n}_{h}\| \| \nabla \theta^{n+1}_{h} - \theta^{n}_{h}\|} \notag
\\ &\leq  \frac{2\Delta t C_{6} C^{1/2}_{inv,2}}{h^{1/2}} \|\nabla {u'}^{n}_{h}\| \|\nabla \theta^{n+1}_{h}\| \|\theta^{n+1}_{h} - \theta^{n}_{h}\| \notag
\\ &\leq \frac{2\Delta t^2 C_{6}^2 C_{inv,2}}{h} \|\nabla {u'}^{n}_{h}\|^{2} \|\nabla \theta^{n+1}_{h}\|^{2} + \frac{1}{2} \|\theta^{n+1}_{h} - \theta^{n}_{h}\|^{2}, \notag
\\ -2\Delta t b({u'}^{n}_{h},u^{n}_{h},u^{n+1}_{h}) &\leq \frac{2 \Delta t^{2} C_{5}^{2} C_{inv,1}}{h} \|\nabla {u'}^{n}_{h}\|^{2} \|\nabla u^{n+1}_{h}\|^{2} + \frac{1}{2} \|u^{n+1}_{h} - u^{n}_{h}\|^{2}.\label{stability:thick:estb}
\end{align}
Use the Cauchy-Schwarz-Young, Poincar\'{e}-Friedrichs inequalities and interpolant estimates on \\$-2\Delta t b^{\ast}(u^{n}_{h},I_{h}\tau,\theta^{n+1}_{h})$,
\begin{align}
-2\Delta t b^{\ast}(u^{n}_{h},I_{h}\tau,\theta^{n+1}_{h}) &\leq \Delta t \| u^{n}_{h} \cdot \nabla I_{h} \tau \| \| \theta^{n+1}_{h} \| + \Delta t \| u^{n}_{h} \cdot \nabla \theta^{n+1}_{h} \| \| I_{h} \tau \| \label{stability:thick:estcouplingtau}
\\ &\leq 2C_{I}^{2} C{tr}^{2} |\Gamma_{N}|(1 + C^{2}_{PF_{2}})\Delta t \| u^{n}_{h} \|^{2} + \frac{\Delta t}{4} \| \nabla \theta^{n+1}_{h} \|^{2}.\notag
\end{align}
Using (\ref{stability:thick:estinttau}) - (\ref{stability:thick:estcouplingtau}) in (\ref{stability:thicku}) leads to
\begin{multline}
\big(\|\theta^{n+1}_{h}\|^{2} - \|\theta^{n}_{h}\|\big) + \big(\|u^{n+1}_{h}\|^{2} - \|u^{n}_{h}\|\big) + \epsilon \big(\|p^{n+1}_{h}\|^{2} - \|p^{n}_{h}\|\big) + \frac{1}{2} \Big(\|\theta^{n+1}_{h} - \theta^{n}_{h}\|^{2} + \|u^{n+1}_{h} - u^{n}_{h}\|^{2}\Big)
\\ + \epsilon \|p^{n+1}_{h} - p^{n}_{h}\|^{2} + \frac{\Delta t}{2} \|\nabla \theta^{n+1}_{h}\|^{2} + \frac{Pr \Delta t}{2}  \|\nabla u^{n+1}_{h}\|^{2} + \frac{\Delta t}{2} \| \nabla \theta^{n+1}_{h} \|^{2} \big\{ 1- \frac{4 \Delta t C_{6}^2 C_{inv,2}}{h} \| \nabla {u'}^{n}_{h} \|^{2} \big\}
\\ + \frac{Pr \Delta t}{2} \| \nabla u^{n+1}_{h} \|^{2} \big\{ 1- \frac{4 \Delta t C_{5}^2 C_{inv,1}}{Pr h} \| \nabla {u'}^{n}_{h} \|^{2} \big\} \leq {3\Delta t PrRa^{2}C_{PF,1}^2} \|\theta^{n}_{h}\|^{2}
\\ + 2C_{I}^{2} C{tr}^{2} |\Gamma_{N}|(1 + C^{2}_{PF_{2}})\Delta t \| u^{n}_{h} \|^{2} + {4 C_{I}^{2} C{tr}^{2} |\Gamma_{N}| \Delta t} + {3\Delta t PrRa^{2}C_{PF,1}^2C_{I}^{2}C{tr}^{2} |\Gamma_{N}|}
\\ + \frac{3\Delta t}{Pr} \| f^{n+1} \|_{-1}^{2} + {4\Delta t} \| g^{n+1} \|_{-1}^{2}.
\end{multline}
\noindent Let $C_{\#} = \max\{3PrRa^{2}C^{2}_{PF,1}, 2C_{I}^{2} C{tr}^{2} |\Gamma_{N}|(1 + C^{2}_{PF_{2}}), \epsilon\}$.  Add $\epsilon \Delta t \|p^{n}_{h}\|^{2}$ to the r.h.s. and take a maximum over constants pertaining to Gronwall terms.  Lastly, using the timestep condition \ref{c1}, and summing from $n = 0$ to $n = N-1$ leads to,
\begin{multline}
\|\theta^{N}_{h}\|^{2} + \|u^{N}_{h}\|^{2} + \epsilon \|p^{N}_{h}\|^{2} + \frac{1}{2}\sum_{n = 0}^{N-1} \Big(\|\theta^{n+1}_{h} - \theta^{n}_{h}\|^{2} + \|u^{n+1}_{h} - u^{n}_{h}\|^{2} + 2\epsilon \|p^{n+1}_{h} - p^{n}_{h}\|^{2}\Big)
\\ + \frac{1}{2} \vertiii{\nabla \theta_{h}}^{2}_{2,0} + \frac{Pr}{2} \vertiii{\nabla u_{h}}^{2}_{2,0} \leq C_{\#} \Delta t \sum_{n = 0}^{N-1} \Big(\|\theta^{n}_{h}\|^{2} + \| u^{n}_{h} \|^{2} + \epsilon\|p^{n}_{h}\|^{2} \Big) + 4 C_{I}^{2} C{tr}^{2} |\Gamma_{N}| t^{\ast}
\\ + 3PrRa^{2}C_{PF,1}^2C_{I}^{2}C{tr}^{2} |\Gamma_{N}| t^{\ast} + \frac{3}{Pr}\vertiii{f}^{2}_{2,-1} + 4\vertiii{g}^{2}_{2,-1}.
\end{multline}
Apply Lemma \ref{l3}.  Then,
\begin{multline}
\|\theta^{N}_{h}\|^{2} + \|u^{N}_{h}\|^{2} + \epsilon \|p^{N}_{h}\|^{2} + \frac{1}{2}\sum_{n = 0}^{N-1} \Big(\|\theta^{n+1}_{h} - \theta^{n}_{h}\|^{2} + \|u^{n+1}_{h} - u^{n}_{h}\|^{2} + 2\epsilon \|p^{n+1}_{h} - p^{n}_{h}\|^{2}\Big)
\\ + \frac{1}{2} \vertiii{\nabla \theta_{h}}^{2}_{2,0} + \frac{Pr}{2} \vertiii{\nabla u_{h}}^{2}_{2,0} \leq exp( C_{\#} t^{\ast}) \Big\{\frac{3}{Pr}\vertiii{f}^{2}_{2,-1} + 4\vertiii{g}^{2}_{2,-1} + 4 C_{I}^{2} C{tr}^{2} |\Gamma_{N}| t^{\ast} 
\\ + 3PrRa^{2}C_{PF,1}^2C_{I}^{2}C{tr}^{2} |\Gamma_{N}| t^{\ast} + \|\theta^{0}_{h}\|^{2} + \|u^{0}_{h}\|^{2} + \epsilon \|p^{0}_{h}\|^{2}\Big\}.
\end{multline}
\noindent The result follows by recalling the identity $T^{n}_{h} = \theta^{n}_{h} - I_{h}\tau$ and applying the triangle inequality.  Thus, numerical approximations of velocity, pressure, and temperature are stable.
\end{proof}
\textbf{Remark:}  The exponential growth factor $\exp(C_{\#}t^{\ast})$ can be improved.  In particular, the following Theorems hold.

\begin{theorem} \label{t1b}
Let $\Omega_{h}$ be a regular mesh and suppose the first
meshline in the finite element mesh is within $\mathcal{O}(Ra^{-1})$ of the heated wall $\Gamma_{N}$.  Moreover, suppose $f \in L^{2}(0,t^{\ast};H^{-1}(\Omega)^{d})$, $g \in L^{2}(0,t^{\ast};H^{-1}(\Omega))$.  If the scheme (\ref{scheme:one:velocity}) - (\ref{scheme:one:temperature}) satisfies
\begin{align}
\max_{1 \leq j \leq J} \max_{K \in \Omega_{h}}\frac{C_{\dagger} \Delta t}{h_{K}}\|\nabla {u'}^{n}_{h}\|^{2}_{L^{2}(K)} \leq 1,\label{c1b}
\end{align}
then there exists $C>0$ such that
\begin{multline}
\frac{1}{2}\|T^{N}_{h}\|^{2} + \|u^{N}_{h}\|^{2} + \epsilon \|p^{N}_{h}\|^{2} + \frac{1}{2}\sum_{n = 0}^{N-1} \Big(\|T^{n+1}_{h} - T^{n}_{h}\|^{2} + \|u^{n+1}_{h} - u^{n}_{h}\|^{2} + 2\epsilon \|p^{n+1}_{h} - p^{n}_{h}\|^{2}\Big)
\\ + \frac{1}{4} \vertiii{\nabla T_{h}}^{2}_{2,0} + \frac{Pr}{8} \vertiii{\nabla u_{h}}^{2}_{2,0} \leq C t^{\ast}.
\end{multline}
\end{theorem}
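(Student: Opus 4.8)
The plan is to return to the energy identity (\ref{stability:thicku}) established in the proof of Theorem \ref{t1} and re-estimate only the two \emph{coupling} terms, so that no term survives that must be fed to the discrete Gronwall inequality (Lemma \ref{l3}). Inspecting that proof, the exponential factor $\exp(C_\# t^*)$ originates from exactly two contributions: the buoyancy term $2PrRa\Delta t(\xi\theta^n_h,u^{n+1}_h)$, which produced the Gronwall term $3\Delta t\,PrRa^2C_{PF,1}^2\|\theta^n_h\|^2$ in (\ref{stability:thick:estT}), and the boundary-coupling term $-2\Delta t\,b^*(u^n_h,I_h\tau,\theta^{n+1}_h)$, which produced $2C_I^2C_{tr}^2|\Gamma_N|(1+C_{PF_2}^2)\Delta t\|u^n_h\|^2$ in (\ref{stability:thick:estcouplingtau}). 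Every other right-hand-side term is either absorbed into the dissipation $2\Delta t\|\nabla\theta^{n+1}_h\|^2+2Pr\Delta t\|\nabla u^{n+1}_h\|^2$ or is a data/interpolation constant whose sum over $n$ is $\mathcal{O}(t^*)$. Hence it suffices to absorb these two terms into the dissipation as well; the drop of the retained velocity dissipation from $\frac{Pr}{2}$ in Theorem \ref{t1} to $\frac{Pr}{8}$ here signals that the velocity dissipation is the sink used to do this.

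First I would redo the analysis with a \emph{temperature-weighted} energy, testing the temperature equation (\ref{scheme:one:temperature}) with $2\lambda\Delta t\,\theta^{n+1}_h$ for a weight $\lambda\sim Ra^2$ to be fixed, so the combined dissipation is $2\lambda\Delta t\|\nabla\theta^{n+1}_h\|^2+2Pr\Delta t\|\nabla u^{n+1}_h\|^2$. The buoyancy term is then estimated symmetrically, using Poincar\'e--Friedrichs on $u^{n+1}_h$ together with $\|\theta^n_h\|\leq C_{PF_2}\|\nabla\theta^n_h\|$, and split by Young's inequality between $\|\nabla u^{n+1}_h\|^2$ and $\|\nabla\theta^n_h\|^2$. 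Because both gradients appear, after summation, on the left-hand side (the $\theta^n_h$-dissipation being recovered from the summed temperature dissipation by an index shift, with the leftover $n=0$ term controlled by the initial data), a sufficiently large choice of $\lambda$ turns this into a \emph{direct} absorption rather than a Gronwall term. The $I_h\tau$-buoyancy term, the $-2\lambda\Delta t(\nabla I_h\tau,\nabla\theta^{n+1}_h)$ term, and the forcing terms are treated as in Theorem \ref{t1}; they generate only solution-independent constants, now $Ra$-dependent through $\lambda$, which sum to $\mathcal{O}(t^*)$.

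The crux, and the reason for the hypotheses on the mesh, is the boundary-coupling term $-2\lambda\Delta t\,b^*(u^n_h,I_h\tau,\theta^{n+1}_h)$: once weighted by $\lambda\sim Ra^2$, the global bound (\ref{stability:thick:estcouplingtau}) no longer closes against the available dissipation. Here I would exploit that $\nabla I_h\tau$ is felt only in the first layer $\Omega_{BL}$ of elements adjacent to $\Gamma_N$, whose width is $\mathcal{O}(Ra^{-1})$ by assumption, and that \emph{both} $u^n_h$ and $\theta^{n+1}_h$ vanish on $\Gamma_N$. A Poincar\'e inequality in the thin normal direction of $\Omega_{BL}$ then yields $\|u^n_h\|_{L^2(\Omega_{BL})}\leq CRa^{-1}\|\nabla u^n_h\|$ and $\|\theta^{n+1}_h\|_{L^2(\Omega_{BL})}\leq CRa^{-1}\|\nabla\theta^{n+1}_h\|$, and these two small factors offset the large weight $\lambda$ and the large $\|\nabla I_h\tau\|_{L^\infty(\Omega_{BL})}\sim Ra$, so that the term is again absorbed into $2Pr\Delta t\|\nabla u^n_h\|^2$ and $2\lambda\Delta t\|\nabla\theta^{n+1}_h\|^2$ with constants independent of $t^*$. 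This localized boundary-layer estimate is the main obstacle: it is precisely the step that fails on a coarse, uniform mesh and that the $\mathcal{O}(Ra^{-1})$ resolution hypothesis is designed to rescue.

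Finally, the fluctuation--convection terms $-2\lambda\Delta t\,b^*({u'}^n_h,\theta^n_h,\theta^{n+1}_h)$ and $-2\Delta t\,b({u'}^n_h,u^n_h,u^{n+1}_h)$ are handled as in (\ref{stability:thick:estbstar})--(\ref{stability:thick:estb}), but with the inverse inequality applied \emph{element by element}; this is exactly what forces the localized timestep restriction (\ref{c1b}) in place of the global (\ref{c1}), which is natural on the non-uniform, wall-refined mesh where the global $h$ is no longer representative. With all solution-dependent terms absorbed into dissipation, the summed inequality contains no term of Gronwall type, so summing from $n=0$ to $N-1$ gives directly a bound of the form $Ct^*$, with $C$ collecting the (possibly large but $t^*$-independent) factors depending on $Ra$, $Pr$, $\Omega$, $\alpha_{min}$, and the data. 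The stated unweighted left-hand side then follows, since $\lambda\geq1$ and $T^n_h=\theta^n_h+I_h\tau$ lets one return to $\|T^n_h\|$ at the cost of an $\mathcal{O}(t^*)$ constant.
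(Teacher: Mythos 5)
Your proposal is correct and follows essentially the route the paper intends: the paper's own proof is only a one-line citation to the discrete Hopf interpolant techniques of \cite{Fiordilino2}, and your sketch --- a $Ra^{2}$-weighted temperature energy so the buoyancy term is absorbed directly into the dissipation rather than sent to Gronwall, a boundary-layer (Hopf) extension supported in the first $\mathcal{O}(Ra^{-1})$ element layer with thin-strip Poincar\'{e} inequalities to absorb the weighted coupling term $b^{\ast}(u^{n}_{h},I_{h}\tau,\theta^{n+1}_{h})$, and elementwise inverse inequalities producing the localized condition (\ref{c1b}) --- is precisely that argument. The one step you should state explicitly is that $I_{h}\tau$ must be \emph{redefined} as the discrete Hopf interpolant (equal to the boundary data at the nodes on $\Gamma_{N}$ and zero at every other node), rather than the interpolant of the global conduction profile used in Theorem \ref{t1}; otherwise $\nabla I_{h}\tau$ is not supported in your layer $\Omega_{BL}$ and the localized Poincar\'{e} estimates have nothing to act on.
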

\begin{proof}
This follows from techniques used herein and in \cite{Fiordilino2}.
\end{proof}

\begin{theorem} \label{t1c}
Suppose the hypotheses of Theorem \ref{t1} hold and either $T\equiv 0$ on the entire Dirichlet boundary $\Gamma_{1}$ or wall-thickness is incorporated into the model.  Then, there exists $C>0$ such that the scheme (\ref{scheme:one:velocity}) - (\ref{scheme:one:temperature}) satisfies
\begin{multline}
\|T^{N}_{h}\|^{2} + \|u^{N}_{h}\|^{2} + \epsilon \|p^{N}_{h}\|^{2} + \frac{1}{2}\sum_{n = 0}^{N-1} \Big(\|T^{n+1}_{h} - T^{n}_{h}\|^{2} + \|u^{n+1}_{h} - u^{n}_{h}\|^{2} + 2\epsilon \|p^{n+1}_{h} - p^{n}_{h}\|^{2}\Big)
\\ + \frac{1}{2} \vertiii{\nabla T_{h}}^{2}_{2,0} + \frac{Pr}{2} \vertiii{\nabla u_{h}}^{2}_{2,0} \leq C.
\end{multline}
\end{theorem}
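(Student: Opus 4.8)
The plan is to exploit the one-directional coupling of the Boussinesq system: temperature is advected passively, so its energy balance is independent of the velocity magnitude, while velocity feels temperature only through the buoyancy forcing $PrRa\,\xi T$. The two Gronwall terms responsible for the factor $\exp(C_\# t^\ast)$ in Theorem \ref{t1} are $3\Delta t\,PrRa^2 C_{PF,1}^2\|\theta^n_h\|^2$ (from the buoyancy coupling) and $2C_I^2 C_{tr}^2|\Gamma_N|(1+C_{PF_2}^2)\Delta t\|u^n_h\|^2$ (from the term $-2\Delta t\,b^\ast(u^n_h,I_h\tau,\theta^{n+1}_h)$ produced by the inhomogeneous lift $\tau$). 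Under either hypothesis the lift is unnecessary: if $T\equiv 0$ on all of $\Gamma_1$ one simply takes $\tau\equiv 0$, and incorporating wall thickness likewise removes the nonhomogeneous Dirichlet data from the fluid boundary, so that $T^n_h=\theta^n_h$ satisfies a homogeneous Dirichlet condition on $\Gamma_1$ and the Poincar\'e--Friedrichs inequality becomes available for the full temperature. This single fact is what dissolves both Gronwall terms.

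First I would estimate the temperature alone. Setting $\tau\equiv 0$, I test (\ref{scheme:one:temperature}) with $S_h=2\Delta t\,T^{n+1}_h\in W_{\Gamma_1,h}$. The ensemble-mean advection $b^\ast(<u_h>^n,T^{n+1}_h,T^{n+1}_h)$ vanishes by skew-symmetry, the fluctuation term $-2\Delta t\,b^\ast({u'}^n_h,T^n_h,T^{n+1}_h)$ is handled exactly as in (\ref{stability:thick:estbstar}) and absorbed using the timestep condition (\ref{c1}), and $2\Delta t(g^{n+1},T^{n+1}_h)$ is bounded by the dual-norm/Young estimate (\ref{stability:thick:estg}). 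Since no $\nabla I_h\tau$ term and no coupling term are present, summing from $n=0$ to $N-1$ yields, with no Gronwall step,
\begin{equation*}
\|T^N_h\|^2 + \tfrac12\sum_{n=0}^{N-1}\|T^{n+1}_h-T^n_h\|^2 + \tfrac12\vertiii{\nabla T_h}^2_{2,0} \leq \|T^0_h\|^2 + 4\vertiii{g}^2_{2,-1} =: C_T,
\end{equation*}
a bound uniform in $t^\ast$. In particular both $\vertiii{\nabla T_h}^2_{2,0}$ and $\max_n\|T^n_h\|^2$ are controlled by the data-dependent constant $C_T$.

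Next I would close the velocity--pressure estimate. Proceeding as in Theorem \ref{t1}, use (\ref{scheme:one:pressure}) in (\ref{scheme:one:velocity}), add the pressure identity (\ref{scheme:one:pressure2}), and test with $(v_h,q_h)=(2\Delta t\,u^{n+1}_h,2\Delta t\,p^{n+1}_h)$. The mean-advection term vanishes and the fluctuation term $-2\Delta t\,b({u'}^n_h,u^n_h,u^{n+1}_h)$ is absorbed via (\ref{stability:thick:estb}) and (\ref{c1}). The only coupling left is the buoyancy term $2\Delta t\,PrRa(\xi T^n_h,u^{n+1}_h)$, and here is the decisive step. Rather than bounding it by $\|\theta^n_h\|^2$ as in (\ref{stability:thick:estT}), I apply Poincar\'e--Friedrichs to $u^{n+1}_h$ and then, crucially, since $T^n_h$ now vanishes on $\Gamma_1$, to $T^n_h$ itself, giving
\begin{equation*}
2\Delta t\,PrRa(\xi T^n_h,u^{n+1}_h)\leq C\,\Delta t\,\|\nabla T^n_h\|^2 + \tfrac{Pr\,\Delta t}{3}\|\nabla u^{n+1}_h\|^2.
\end{equation*}
Summing over $n$, the velocity dissipation is absorbed into the left side, and $\Delta t\sum_{n=0}^{N-1}\|\nabla T^n_h\|^2\leq \Delta t\|\nabla T^0_h\|^2+\vertiii{\nabla T_h}^2_{2,0}$ is already bounded by Step~1. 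Thus no temperature term survives on the right as a function of the unknown velocity, the discrete Gronwall lemma of Lemma \ref{l3} is never invoked, and one obtains $\|u^N_h\|^2+\epsilon\|p^N_h\|^2+\tfrac12\sum(\cdots)+\tfrac{Pr}{2}\vertiii{\nabla u_h}^2_{2,0}\leq C$. Adding this to the Step~1 bound gives the stated estimate.

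The main obstacle, and indeed the entire content of the theorem, is controlling the buoyancy coupling without accruing exponential growth. Everything hinges on the Poincar\'e--Friedrichs inequality being available for the temperature, which is exactly what the hypothesis $T\equiv 0$ on $\Gamma_1$ (or the wall-thickness model) supplies by rendering the lift $\tau$ superfluous; I would also verify that in the wall-thickness formulation the temperature data on the fluid boundary is genuinely homogeneous, or contributes only bounded, non-accumulating data, so that the same Poincar\'e argument and the a priori bound $\vertiii{\nabla T_h}^2_{2,0}\leq C_T$ carry through unchanged. The remaining manipulations are precisely the skew-symmetry and CFL absorptions already established in the proof of Theorem \ref{t1}.
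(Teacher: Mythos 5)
Your proposal is correct and is essentially the argument the paper intends: the paper's own ``proof'' is only a citation to the techniques of Bernardi--M\'etivet--Pernaud-Thomas, Boland--Layton, and Fiordilino--Khankan, which are precisely what you carry out --- drop the lift $\tau$, bound the temperature first (uniformly, with no Gronwall step, thanks to skew-symmetry and the CFL absorption), and then control the buoyancy term via Poincar\'e--Friedrichs on $T^{n}_{h}$ so that it is absorbed by the already-bounded temperature dissipation $\vertiii{\nabla T_{h}}^{2}_{2,0}$ rather than feeding a Gronwall term. The coefficients you obtain also match the statement (full $\|T^{N}_{h}\|^{2}$ and $\tfrac12\vertiii{\nabla T_{h}}^{2}_{2,0}$, reflecting the absence of the triangle-inequality splitting through $\theta^{n}_{h}=T^{n}_{h}-I_{h}\tau$), so no further changes are needed.
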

\begin{proof}
This follows from techniques used herein and in \cite{Bernardi,Boland,Fiordilino}.
\end{proof}
\subsection{Error Analysis}
Denote $u^{n}$, $p^{n}$, and $T^{n}$ as the true solutions at time $t^{n} = n\Delta t$.  Assume the solutions satisfy the following regularity assumptions:
\begin{align} 
u &\in L^{\infty}(0,t^{\ast};X \cap H^{k+1}(\Omega)), \;  T, \tau \in L^{\infty}(0,t^{\ast};W \cap H^{k+1}(\Omega)), \notag \\
u_{t}, T_{t} &\in L^{2}(0,t^{\ast};H^{k+1}(\Omega)), \; u_{tt}, T_{tt} \in L^{2}(0,t^{\ast};H^{k+1}(\Omega)), \label{error:regularity} \\
p &\in L^{2}(0,t^{\ast};Q \cap H^{m}(\Omega)), \; p_{t} \in L^{2}(0,t^{\ast};Q(\Omega)). \notag
\end{align}
\noindent \textbf{Remark:}  Regularity of the auxiliary temperature solution $\theta$ follows since $\theta = T - \tau$.  Convergence is proven for $\theta$ first.  The result will follow for the primitive variable $T$ via the triangle inequality and interpolation estimates.  

The errors for the solution variables are denoted
\begin{align*}
e^{n}_{u} &= (u^{n} - U^{n}) - (u^{n}_{h}- U^{n}) = \eta^{n} - \phi^{n}_{h},
\\ e^{n}_{\theta} &= (\theta^{n} - I_{h}\theta^{n}) - (\theta^{n}_{h}-I_{h}\theta^{n}) = \zeta^{n} - \psi^{n}_{h}, 
\\ e^{n}_{p} &= (p^{n} - P^{n}) - (p^{n}_{h}- P^{n}) = \lambda^{n} - \pi^{n}_{h}.  
\end{align*}\vspace*{-\baselineskip}
\begin{definition} (Consistency error).  The consistency errors are denoted
\begin{align*}
\varsigma_{u}(u^{n};v_{h}) &= \big(\frac{u^{n} - u^{n-1}}{\Delta t} - u^{n}_{t}, v_{h}\big) - b(u^{n}-u^{n-1},u^{n},v_{h}) + PrRa(\xi (T^{n}-T^{n-1}),v_{h}),
\\ \varsigma_{p}(p^{n};q_{h}) &= \epsilon \big(\frac{1}{\Delta t} \int^{t^{n}}_{t^{n-1}} p_{t}(s) ds, q_{h}\big),
\\ \varsigma_{T}(T^{n};S_{h}) &= \big(\frac{T^{n} - T^{n-1}}{\Delta t} - T^{n}_{t}, S_{h}\big) - b^{\ast}(u^{n}-u^{n-1},T^{n},S_{h}).
\end{align*}
\end{definition}
\begin{lemma}\label{consistency}
Provided  $u \; and \; T$ satisfy the regularity assumptions \ref{error:regularity}, then $\exists \; C >0$ such that $\forall \; \epsilon,\; r > 0$
\begin{align*}
\lvert \varsigma_{u}(u^{n};v_{h}) \rvert &\leq \frac{C C^{2}_{PF,1} C_{r} \Delta t}{\delta}\| u_{tt}\|^{2}_{L^{2}(t^{n-1},t^{n};L^{2}(\Omega))} + \frac{C_{1}^{2}C_{r}\Delta t}{\delta}\| \nabla u^{n}\|^{2}\| \nabla u_{t}\|^{2}_{L^{2}(t^{n-1},t^{n};L^{2}(\Omega))}
\\ &+ \frac{C^{2}_{PF,1}C_{r}\Delta t}{\delta}\| T_{t}\|^{2}_{L^{2}(t^{n-1},t^{n};L^{2}(\Omega))} + \frac{\delta}{r} \| \nabla v_{h} \|^{2},
\\ \lvert \varsigma_{T}(T^{n};S_{h}) \rvert &\leq \frac{C C^{2}_{PF,2} C_{r} \Delta t}{\delta}\| T_{tt}\|^{2}_{L^{2}(t^{n-1},t^{n};L^{2}(\Omega))} + \frac{C_{3}^{2}C_{r}\Delta t}{\delta}\| \nabla T^{n}\|^{2}\| \nabla u_{t}\|^{2}_{L^{2}(t^{n-1},t^{n};L^{2}(\Omega))} + \frac{\delta}{r} \| \nabla S_{h} \|^{2}.
\end{align*}
\end{lemma}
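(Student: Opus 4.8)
The plan is to bound each of the three terms comprising $\varsigma_u$ (and the two comprising $\varsigma_T$) separately, converting every temporal increment into a time integral of a derivative via Taylor's theorem and then using the Cauchy--Schwarz inequality in time to extract the correct powers of $\Delta t$. The continuity estimates for $b$ and $b^\ast$ from Lemma \ref{l1} and the Poincar\'e--Friedrichs inequality will dispatch the spatial norms, and a closing Cauchy--Schwarz--Young step will peel off the $\frac{\delta}{r}\|\nabla v_h\|^2$ (resp.\ $\frac{\delta}{r}\|\nabla S_h\|^2$) terms.

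For the difference-quotient term I would use the first-order Taylor expansion with integral remainder,
\[
\frac{u^n - u^{n-1}}{\Delta t} - u_t^n = -\frac{1}{\Delta t}\int_{t^{n-1}}^{t^n}(s - t^{n-1})\,u_{tt}(s)\,ds,
\]
so that Cauchy--Schwarz in time gives $\big\|\frac{u^n-u^{n-1}}{\Delta t}-u_t^n\big\| \leq C\,\Delta t^{1/2}\|u_{tt}\|_{L^2(t^{n-1},t^n;L^2(\Omega))}$. Pairing with $v_h$, passing from $\|v_h\|$ to $\|\nabla v_h\|$ through Poincar\'e--Friedrichs (constant $C_{PF,1}$), and applying Young's inequality produces the first and last terms of the $\varsigma_u$ bound. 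The buoyancy term is handled the same way: writing $T^n - T^{n-1} = \int_{t^{n-1}}^{t^n}T_t(s)\,ds$ yields $\|T^n - T^{n-1}\| \leq \Delta t^{1/2}\|T_t\|_{L^2(t^{n-1},t^n;L^2(\Omega))}$, and another $C_{PF,1}$/Young step gives the third term, the fixed factor $PrRa$ being absorbed into $C$.

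For the trilinear term I would invoke the first continuity bound of Lemma \ref{l1}, $b(u^n - u^{n-1}, u^n, v_h) \leq C_1 \|\nabla(u^n - u^{n-1})\|\,\|\nabla u^n\|\,\|\nabla v_h\|$, estimate $\|\nabla(u^n - u^{n-1})\| = \big\|\int_{t^{n-1}}^{t^n}\nabla u_t(s)\,ds\big\| \leq \Delta t^{1/2}\|\nabla u_t\|_{L^2(t^{n-1},t^n;L^2(\Omega))}$ by Cauchy--Schwarz in time, and close with Young's inequality to obtain the middle term. The estimate for $\varsigma_T$ is structurally identical: the difference-quotient term produces the $\|T_{tt}\|^2$ contribution, now carrying the temperature Poincar\'e constant $C_{PF,2}$ since $S_h \in W_{\Gamma_1,h}$, while $b^\ast(u^n - u^{n-1}, T^n, S_h) \leq C_3\|\nabla(u^n - u^{n-1})\|\,\|\nabla T^n\|\,\|\nabla S_h\|$ together with the same temporal bound delivers the $\|\nabla T^n\|^2\|\nabla u_t\|^2$ term. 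The several copies of $\frac{\delta}{r}\|\nabla v_h\|^2$ generated by Young are consolidated by distributing $\delta/r$ equally among the terms.

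The computation is essentially routine; the only points needing care are the correct Taylor-remainder representation of the difference-quotient error, which is precisely what forces the appearance of $u_{tt}$ and $T_{tt}$ rather than $u_t$ and $T_t$, and the consistent bookkeeping of the two distinct Poincar\'e--Friedrichs constants $C_{PF,1}$ and $C_{PF,2}$ attached to the velocity space $X$ and the temperature space $W_{\Gamma_1}$. I do not anticipate any genuine obstacle beyond keeping the powers of $\Delta t$ and the Young parameters $\delta,r$ straight.
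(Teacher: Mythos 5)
Your proposal is correct and follows exactly the route the paper takes: its entire proof of this lemma is the one-line citation of Taylor's theorem with integral remainder, the Cauchy--Schwarz--Young inequality, and the Poincar\'e--Friedrichs inequality, which are precisely the tools you deploy (together with the continuity bounds of Lemma \ref{l1} for the trilinear terms). Your Taylor-remainder identity for the difference quotient and the resulting $\Delta t^{1/2}$ scalings are the right computations; no gap.
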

\begin{proof}
These follow from the Cauchy-Schwarz-Young inequality, Poincar\'{e}-Friedrichs inequality, and Taylor's Theorem with integral remainder.
\end{proof}
\begin{theorem} \label{t2}
For (u,p,T) satisfying (1) - (5), suppose that $(u^{0}_{h},p^{0}_{h},T^{0}_{h}) \in (X_{h},Q_{h},W_{h})$ are approximations of $(u^{0},p^{0},T^{0})$ to within the accuracy of the interpolant.  Further, suppose that condition (\ref{c1}) holds. Then there exists a constant $C>0$ such that
\begin{multline*}
\frac{1}{2}\|e^{N}_{T}\|^{2} + \|e^{N}_{u}\|^{2} + \gamma^{-1} \Delta t \|e^{N}_{p}\|^{2} + \frac{1}{2}\sum^{N-1}_{n=0} \Big\{\|e^{n+1}_{T}-e^{n}_{T}\|^{2} + \|e^{n+1}_{u}-e^{n}_{u}\|^{2} + \gamma^{-1} \Delta t \|e^{n+1}_{p} - e^{n}_{p}\|^{2}\Big\}
\\  + \frac{Pr\Delta t}{4} \|\nabla e^{N}_{u}\|^{2} + \frac{1}{4}\vertiii{\nabla e_{T}}^{2}_{2,0} + \frac{Pr}{2} \vertiii{\nabla e_{u}}^{2}_{2,0} \leq C exp(C_{\star} t^{\ast}) \inf_{\substack{v_{h} \in X_{h}\\q_{h} \in Q_{h}\\S_{h} \in \hat{W}_{h}}} \Big\{ \| (T-S_{h})_{t} \|^{2}_{L^{2}(0,t^{\ast};L^{2}(\Omega))}
\\ + \| (u-v_{h})_{t} \|^{2}_{L^{2}(0,t^{\ast};L^{2}(\Omega))} + \Delta t^{2} \| (p-q_{h})_{t} \|^{2}_{L^{2}(0,t^{\ast};L^{2}(\Omega))} + h\Delta t \| \nabla (T-S_{h})_{t} \|^{2}_{L^{2}(0,t^{\ast};L^{2}(\Omega))}
\\ + h \Delta t \| \nabla (u-v_{h})_{t} \|^{2}_{L^{2}(0,t^{\ast};L^{2}(\Omega))} + \vertiii{T-S_{h}}^{2}_{2,0} + \vertiii{\nabla (T-S_{h})}^{2}_{2,0} + \vertiii{T-S_{h}}_{2,0} \vertiii{\nabla (T-S_{h})}_{2,0}
\\ + \vertiii{u-v_{h}}_{2,0} \vertiii{\nabla (u-v_{h})}_{2,0} + t^{\ast}\big(\|\tau - I_{h}\tau\|^{2} + \|\nabla (\tau - I_{h}\tau)\|^{2}\big) + \Delta t^{2} + h\Delta t
\\ + 2\|e^{0}_{T}\|^{2} + \|e^{0}_{u}\|^{2} + \gamma^{-1} \Delta t \|e^{0}_{p}\|^{2} + \frac{Pr\Delta t}{4}\|\nabla e^{0}_{u}\|^{2}\Big\}
\end{multline*}
\end{theorem}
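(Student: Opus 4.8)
The plan is to mirror the energy argument of Theorem \ref{t1}, but applied to error equations rather than to the discrete solution directly. First I would cast the true solution $(u,p,T)$ at time $t^{n+1}$ into the discrete variational framework: test (\ref{boussinesqv})--(\ref{boussinesqt}) against $(v_h,q_h,S_h)\in(X_h,Q_h,W_{\Gamma_1,h})$ and replace the true time derivatives $u_t^{n+1}, T_t^{n+1}$ and the true convection $b(u^{n+1},u^{n+1},v_h)$, $b^\ast(u^{n+1},T^{n+1},S_h)$ by their backward-difference, ensemble-split analogues. The discrepancy is exactly the consistency error of Lemma \ref{consistency}; in particular, rewriting $b(u^{n+1},u^{n+1},v_h)$ as $b(\langle u\rangle^n,u^{n+1},v_h)+b({u'}^n,u^n,v_h)$ produces the defect $b(u^{n+1}-u^{n},u^{n+1},v_h)$ and the splitting remainders that are collected in $\varsigma_u$, and analogously for temperature in $\varsigma_T$.

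Subtracting the scheme (\ref{scheme:one:velocity})--(\ref{scheme:one:temperature}) and the variational pressure update (\ref{scheme:one:pressure2}) from these identities gives coupled error equations. I would then split each error through the Stokes projection for $(u,p)$ and the interpolant for $\theta$, namely $e_u^n=\eta^n-\phi_h^n$, $e_p^n=\lambda^n-\pi_h^n$, $e_\theta^n=\zeta^n-\psi_h^n$ as introduced before the statement. The essential role of the Stokes projection (Lemma \ref{l2}) is that it annihilates both the pressure--divergence coupling $(\lambda^{n+1},\nabla\cdot v_h)$ and the divergence term $(\nabla\cdot\eta^{n+1},q_h)$, so that neither must be estimated; the approximation error $\lambda^n$ then enters only through the $\epsilon=\gamma^{-1}\Delta t$ weighted pressure contributions visible in the theorem.

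Next I would take $(v_h,q_h,S_h)=(2\Delta t\,\phi_h^{n+1},\,2\Delta t\,\pi_h^{n+1},\,2\Delta t\,\psi_h^{n+1})$ and apply the polarization identity, exactly as in (\ref{stability:thicku}), to generate on the left the telescoping norms $\|\phi_h^{n+1}\|^2-\|\phi_h^n\|^2+\|\phi_h^{n+1}-\phi_h^n\|^2$, the pressure analogue weighted by $\epsilon$, and the dissipation $2\Delta t\,(Pr\|\nabla\phi_h^{n+1}\|^2+\|\nabla\psi_h^{n+1}\|^2)$. On the right I would bound the consistency errors by Lemma \ref{consistency}, and the approximation-error contributions (the $\eta,\zeta,\lambda$ terms, the buoyancy coupling $PrRa(\xi e_\theta,\phi_h^{n+1})$, and the $\tau$-interpolation terms) by Cauchy-Schwarz-Young together with the continuity bounds of Lemma \ref{l1}, hiding the resulting $\|\nabla\phi_h^{n+1}\|^2$ and $\|\nabla\psi_h^{n+1}\|^2$ factors in the dissipation. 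The extra $\tfrac{Pr\Delta t}{4}\|\nabla e_u^N\|^2$ on the left (paired with its initial value on the right) is a telescoping remainder left over from partitioning the viscous term during this absorption step.

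The main obstacle is the explicit fluctuation nonlinearity, i.e. the error images of $b({u'}_h^n,\phi_h^n,\phi_h^{n+1})$ and $b^\ast({u'}_h^n,\psi_h^n,\psi_h^{n+1})$, which cannot be controlled by the dissipation alone. As in (\ref{stability:thick:estbstar})--(\ref{stability:thick:estb}), I would use skew-symmetry to shift the time difference onto the third slot, then invoke the inverse inequality and Cauchy-Schwarz-Young to produce factors $\tfrac{\Delta t^2}{h}\|\nabla{u'}_h^n\|^2\|\nabla\phi_h^{n+1}\|^2$, which are absorbed into the viscous term precisely under the CFL condition (\ref{c1}). Once every term is bounded, I would sum from $n=0$ to $N-1$, observe that the residual $\|\phi_h^n\|^2,\|\psi_h^n\|^2,\epsilon\|\pi_h^n\|^2$ terms form the Gronwall sum, and apply the discrete Gronwall Lemma \ref{l3} with $C_\star$ absorbing the Gronwall constants. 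Finally, the triangle inequality together with the approximation properties (\ref{a1})--(\ref{a3}), the Stokes estimate of Lemma \ref{l2}, and the relation $T=\theta+\tau$ converts the bound on $\phi_h,\psi_h,\pi_h$ into the stated infimum over $v_h,q_h,S_h$, with the $\tau$-interpolation terms and the $\Delta t^2+h\Delta t$ contributions emerging from the consistency estimates and the inverse-inequality step.
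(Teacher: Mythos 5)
Your proposal follows essentially the same route as the paper's proof: true-solution variational identities with the consistency errors of Lemma \ref{consistency}, error decomposition through the Stokes projection and interpolant, testing with $(2\Delta t\,\phi_h^{n+1},2\Delta t\,\pi_h^{n+1},2\Delta t\,\psi_h^{n+1})$, absorption of the explicit fluctuation nonlinearities via skew-symmetry, the inverse inequality, and condition (\ref{c1}), then Gronwall and the triangle inequality. The only imprecision is that the Stokes projection annihilates the combination $Pr(\nabla\eta,\nabla v_h)-(\lambda,\nabla\cdot v_h)$ rather than the pressure coupling on its own, but this does not change the argument.
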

\begin{proof}
Let $T^{n} = \theta^{n} + \tau$.  The true solutions satisfy for all $n = 0, ... N-1$:
\begin{align}
(\frac{u^{n+1} - u^{n}}{\Delta t},v_{h}) + b(u^{n},u^{n+1},v_{h}) + Pr(\nabla u^{n+1},\nabla v_{h}) - (p^{n+1}, \nabla \cdot v_{h}) \label{error:one:truevelocity}
\\ =  PrRa(\xi (\theta^{n}+\tau),v_{h}) + (f^{n+1},v_{h}) + \varsigma_{u}(u^{n+1};v_{h}) \; \; \forall v_{h} \in X_{h},\notag
\\ \epsilon(\frac{p^{n+1}-p^{n}}{\Delta t}) +(\nabla \cdot u^{n+1},q_{h}) = \varsigma_{p}(p^{n+1};q_{h}) \; \; \forall q_{h} \in Q_{h} \label{error:one:truepressure},
\\ (\frac{\theta^{n+1} - \theta^{n}}{\Delta t},S_{h}) + b^{\ast}(u^{n},\theta^{n+1},S_{h}) + (\nabla \theta^{n+1},\nabla S_{h}) + (\nabla \tau,\nabla S_{h}) \label{error:one:truetemp}
\\ = (g^{n+1},S_{h}) + \varsigma_{T}(\theta^{n+1};S_{h}) \; \; \forall S_{h} \in W_{\Gamma_{1,h}}.\notag
\end{align}
Subtract (\ref{scheme:one:temperature}) from (\ref{error:one:truetemp}), then the error equation for temperature is
\begin{align}
(\frac{e^{n+1}_{\theta} - e^{n}_{\theta}}{\Delta t},S_{h}) + b^{\ast}(u^{n},\theta^{n+1},S_{h}) - b^{\ast}(<u_{h}>^{n},\theta^{n+1}_{h},S_{h}) - b^{\ast}({u'}^{n}_{h},\theta^{n}_{h},S_{h})
\\ + b^{\ast}(u^{n},\tau,S_{h})-b^{\ast}(u^{n}_{h},I_{h}\tau,S_{h}) + (\nabla e^{n+1}_{\theta},\nabla S_{h}) + (\nabla (\tau - I_{h}\tau),\nabla S_{h})  = \varsigma_{T}(\theta^{n+1},S_{h}) \; \; \forall S_{h} \in W_{\Gamma_{1,h}} \notag.
\end{align}
Decomposing the error terms and rearranging gives,
\begin{multline*}
(\frac{\psi^{n+1}_{h} - \psi^{n}_{h}}{\Delta t},S_{h}) + (\nabla \psi^{n+1}_{h},\nabla S_{h}) = (\frac{\zeta^{n+1} - \zeta^{n}}{\Delta t},S_{h}) + (\nabla \zeta^{n+1},\nabla S_{h})
\\ + (\nabla (\tau - I_{h}\tau),\nabla S_{h}) + b^{\ast}(u^{n},\theta^{n+1},S_{h}) - b^{\ast}(u^{n}_{h},\theta^{n+1}_{h},S_{h}) - b^{\ast}({u'}^{n}_{h},\theta^{n}_{h},S_{h})
\\ + b^{\ast}(u^{n},\tau,S_{h})-b^{\ast}(u^{n}_{h},I_{h}\tau,S_{h}) - \varsigma_{T}(\theta^{n+1},S_{h}) \; \; \forall S_{h} \in W_{\Gamma_{1,h}}.
\end{multline*}
Setting $S_{h} = 2\Delta t \psi^{n+1}_{h} \in W_{\Gamma_{1,h}}$ yields
\begin{multline*}
\Big\{\|\psi^{n+1}_{h}\|^{2} - \|\psi^{n}_{h}\|^{2} + \|\psi^{n+1}_{h}-\psi^{n}_{h}\|^{2}\Big\} + 2\Delta t \|\nabla \psi^{n+1}_{h}\|^{2} = (\zeta^{n+1}-\zeta^{n},\psi^{n+1}_{h}) + 2\Delta t (\nabla \zeta^{n+1},\nabla \psi^{n+1}_{h})
\\ + 2\Delta t (\nabla (\tau - I_{h}\tau),\nabla \psi^{n+1}_{h}) + 2\Delta t b^{\ast}(u^{n},\theta^{n+1},\psi^{n+1}_{h}) - 2\Delta t b^{\ast}(u^{n}_{h},\theta^{n+1}_{h},\psi^{n+1}_{h}) - 2\Delta t b^{\ast}({u'}^{n}_{h},\theta^{n}_{h},\psi^{n+1}_{h})
\\ + 2\Delta t b^{\ast}(u^{n+1},\tau,\psi^{n+1}_{h}) - 2\Delta t b^{\ast}(u^{n}_{h},I_{h}\tau,\psi^{n+1}_{h}) - 2\Delta t \varsigma_{T}(\theta^{n+1},\psi^{n+1}_{h}).
\end{multline*}
Add and subtract $2\Delta t b^{\ast}(u^{n},\theta^{n+1}_{h},\psi^{n+1}_{h})$, $2\Delta t b^{\ast}({u'}^{n}_{h},\theta^{n+1}-\theta^{n},\psi^{n+1}_{h})$, and $2\Delta t b^{\ast}(u^{n},\tau-I_{h}\tau,\psi^{n+1}_{h})$ to the r.h.s.  Rearrange and use skew-symmetry, then
\begin{multline}\label{fet1}
\Big\{\|\psi^{n+1}_{h}\|^{2} - \|\psi^{n}_{h}\|^{2} + \|\psi^{n+1}_{h}-\psi^{n}_{h}\|^{2}\Big\} + 2\Delta t \|\nabla \psi^{n+1}_{h}\|^{2} = 2(\zeta^{n+1}-\zeta^{n},\psi^{n+1}_{h}) + 2\Delta t (\nabla \zeta^{n+1},\nabla \psi^{n+1}_{h})
\\ + 2\Delta t (\nabla (\tau - I_{h}\tau),\nabla \psi^{n+1}_{h}) + 2\Delta t b^{\ast}(u^{n},\zeta^{n+1},\psi^{n+1}_{h}) + 2\Delta t b^{\ast}(\eta^{n},\theta^{n+1}_{h},\psi^{n+1}_{h}) - 2\Delta t b^{\ast}(\phi^{n}_{h},\theta^{n+1}_{h},\psi^{n+1}_{h})
\\ - 2\Delta t b^{\ast}({u'}^{n}_{h},\zeta^{n+1}-\zeta^{n},\psi^{n+1}_{h}) - 2\Delta t b^{\ast}({u'}^{n}_{h},\psi^{n}_{h},\psi^{n+1}_{h}) + 2\Delta t b^{\ast}({u'}^{n}_{h},\theta^{n+1}-\theta^{n},\psi^{n+1}_{h})
\\ + 2\Delta t b^{\ast}(u^{n+1}-u^{n},\tau,\psi^{n+1}_{h}) + 2\Delta t b^{\ast}(\eta^{n},I_{h}\tau,\psi^{n+1}_{h}) - 2\Delta t b^{\ast}(\phi^{n}_{h},I_{h}\tau,\psi^{n+1}_{h}) 
\\ + 2\Delta t b^{\ast}(u^{n},\tau-I_{h}\tau,\psi^{n+1}_{h}) - 2\Delta t \varsigma_{T}(\theta^{n+1},\psi^{n+1}_{h}).
\end{multline}
Follow analogously for the velocity error equation.  Subtract (\ref{scheme:one:velocity}) from (\ref{error:one:truevelocity}).  Let $v_{h} = 2\Delta t \phi^{n+1}_{h} \in X_{h}$, add and subtract $b(u^{n},u^{n+1}_{h},\phi^{n+1}_{h})$ and $b({u'}^{n}_{h},u^{n+1}-u^{n},\phi^{n+1}_{h})$, rearrange and use skew-symmetry.  Then,
\begin{multline}\label{feu1}
\Big\{\|\phi^{n+1}_{h}\|^{2} - \|\phi^{n}_{h}\|^{2} + \|\phi^{n+1}_{h}-\phi^{n}_{h}\|^{2}\Big\} + 2 Pr \Delta t \|\nabla \phi^{n+1}_{h}\|^{2} - 2 \Delta t (\pi^{n+1}_{h},\nabla \cdot \phi^{n+1}_{h}) = 2(\eta^{n+1}-\eta^{n},\phi^{n+1}_{h})
\\ - 2PrRa\Delta t(\xi \zeta^{n},\phi^{n+1}_{h}) + 2PrRa\Delta t (\xi \psi^{n}_{h},\phi^{n+1}_{h}) - 2PrRa\Delta t(\xi (\tau - I_{h}\tau),\phi^{n+1}_{h}) + 2 \Delta t b(u^{n},\eta^{n+1},\phi^{n+1}_{h})
\\ + 2 \Delta t b(\eta^{n},u^{n+1}_{h},\phi^{n+1}_{h}) - 2 \Delta t b(\phi^{n}_{h},u^{n+1}_{h},\phi^{n+1}_{h}) - 2 \Delta t b({u'}^{n}_{h},\eta^{n+1}-\eta^{n},\phi^{n+1}_{h}) - 2 \Delta t b({u'}^{n}_{h},\phi^{n}_{h},\phi^{n+1}_{h})
\\ + 2 \Delta t b({u'}^{n}_{h},u^{n+1}-u^{n},\phi^{n+1}_{h}) - 2 \Delta t \varsigma_{u}(u^{n+1},\phi^{n+1}_{h}).
\end{multline}
Similarly, for the pressure equation, subtract (\ref{scheme:one:pressure2}) from (\ref{error:one:truepressure}).  Let $q_{h} = 2\Delta t \pi^{n+1}_{h} \in Q_{h}$ and rearrange, then
\begin{multline}\label{fep1}
\epsilon \Big\{ \|\pi^{n+1}_{h}\|^{2} - \|\pi^{n}_{h}\|^{2} + \|\pi^{n+1}_{h} - \pi^{n}_{h}\|^{2}\Big\} + 2\Delta t(\nabla \cdot \phi^{n+1}_{h},\pi^{n+1}_{h})
\\ = 2\epsilon (\lambda^{n+1}-\lambda^{n},\pi^{n+1}_{h}) - 2 \Delta t \varsigma_{p}(p^{n+1},\pi^{n+1}_{h}).
\end{multline}
We seek to now estimate all terms on the r.h.s. in such a way that we may subsume the terms involving unknown pieces $\psi^{k}_{h}$, $\phi^{k}_{h}$, and $\pi^{k}_{h}$ into the l.h.s.  The following estimates are formed using skew-symmetry, Lemma \ref{l1}, and the Cauchy-Schwarz-Young inequality,
\begin{align}
2\Delta t b^{\ast}(u^{n},\zeta^{n+1},\psi^{n+1}_{h}) &\leq 2C_{6} \Delta t\| \nabla u^{n} \| \| \nabla \psi^{n+1}_{h} \| \sqrt{\| \zeta^{n+1} \| \| \nabla \zeta^{n+1} \|}  
\\ &\leq \frac{4C_{r} C_{6}^{2}\Delta t}{\delta_4} \| \nabla u^{n} \|^{2} \| \| \zeta^{n+1} \| \| \nabla \zeta^{n+1} \| + \frac{\delta_4 \Delta t}{r} \| \nabla \psi^{n+1}_{h} \|^{2},\notag 
\\ 2\Delta t b^{\ast}(\eta^{n},\theta^{n+1}_{h},\psi^{n+1}_{h}) &\leq \frac{4C_{r} C_{4}^{2}}{\delta_5} \| \nabla \theta^{n+1}_{h} \|^{2}\| \eta^{n} \| \| \nabla \eta^{n} \| + \frac{\delta_5}{r} \| \nabla \psi^{n+1}_{h} \|^{2}. \notag 
\end{align}
Applying Lemma \ref{l1}, the Cauchy-Schwarz-Young inequality, Taylor's theorem, and condition \ref{c1}  yields,
\begin{align}
-2\Delta t b^{\ast}({u'}^{n}_{h},\zeta^{n+1}-\zeta^{n},\psi^{n+1}_{h}) &\leq C_{3} \| \nabla {u'}^{n}_{h} \| \| \nabla \psi^{n+1}_{h} \|\| \nabla \zeta^{n+1}-\zeta^{n})\|
\\ &\leq \frac{4C_{r} C_{3}^{2} \Delta t^{2}}{\delta_7} \| \nabla {u'}^{n}_{h} \|^{2}\| \nabla \zeta_{t} \|^{2}_{L^{2}(t^{n},t^{n+1};L^{2}(\Omega))} + \frac{\delta_7\Delta t}{r} \| \nabla \psi^{n+1}_{h} \|^{2}, \notag
\\ &\leq \frac{4C_{r} C_{3}^{2} h \Delta t}{C_{\dagger}\delta_7}\| \nabla \zeta_{t} \|^{2}_{L^{2}(t^{n},t^{n+1};L^{2}(\Omega))} + \frac{\delta_7\Delta t}{r} \| \nabla \psi^{n+1}_{h} \|^{2}, \notag
\\ 2\Delta t b^{\ast}({u'}^{n}_{h},\theta^{n+1}-\theta^{n},\psi^{n+1}_{h}) &\leq C_{3} \| \nabla {u'}^{n}_{h} \| \| \nabla (\theta^{n+1}-\theta^{n}) \| \| \nabla \psi^{n+1}_{h} \| 
\\ &\leq \frac{4C_{r} C_{3}^{2}h \Delta t}{C_{\dagger}\delta_{9}}\| \nabla \theta_{t} \|^{2}_{L^{2}(t^{n},t^{n+1};L^{2}(\Omega))} + \frac{\delta_{9}\Delta t}{r} \| \nabla \psi^{n+1}_{h}\|^{2}.\notag
\end{align}
Apply the triangle inequality, Lemma \ref{l1} and the Cauchy-Schwarz-Young inequality twice.  This yields
\begin{align}
-2\Delta t b^{\ast}(\phi^{n}_{h},\theta^{n+1}_{h},\psi^{n+1}_{h}) &\leq 2C_{4}\Delta t \| \nabla \theta^{n+1}_{h} \| \| \nabla \psi^{n+1}_{h} \| \sqrt{\| \phi^{n}_{h}\| \| \nabla \phi^{n}_{h}\|} 
\\ &\leq 2C_{4} C_{\theta}(j) \Delta t \| \nabla \psi^{n+1}_{h} \| \sqrt{\| \phi^{n}_{h}\| \| \nabla \phi^{n}_{h} \|} \notag
\\ &\leq \delta_{6}\Delta t \| \nabla \psi^{n+1}_{h} \|^2 + \frac{C_{4}^{2} C_{\theta}^{2}\Delta t}{\delta_{6}} \| \phi^{n}_{h}\| \| \nabla \phi^{n}_{h} \| \notag
\\ &\leq \delta_{6}\Delta t \| \nabla \psi^{n+1}_{h} \|^2 + \frac{C_{4}^{2} C_{\theta}^{2}\Delta t}{2\delta_{6}\sigma_{6}} \| \phi^{n}_{h}\|^{2} + \frac{C_{4}^{2} C_{\theta}^{2}\sigma_{6}\Delta t}{2\delta_{6}}\| \nabla \phi^{n}_{h} \|^{2}, \notag
\\ - 2\Delta t b^{\ast}(\phi^{n}_{h},I_{h}\tau,\psi^{n+1}_{h}) &\leq \delta_{12}\Delta t\| \nabla \psi^{n+1}_{h} \|^2 + \frac{C_{4}^{2} C_{I}^{2}C_{tr}^{2} |\Gamma_{N}|\Delta t}{2\delta_{12}\sigma_{12}} \| \phi^{n}_{h}\|^{2} + \frac{C_{4}^{2} C_{I}^{2}C_{tr}^{2} |\Gamma_{N}|\sigma_{12}\Delta t}{2\delta_{12}}\| \nabla \phi^{n}_{h} \|^{2}.
\end{align}
Use Lemma \ref{l1}, the inverse inequality, and the Cauchy-Schwarz-Young inequality yielding
\begin{align}
2 \Delta t b^{\ast}({u'}^{n}_{h},\psi^{n}_{h},\psi^{n+1}_{h}) &\leq \frac{2C_{6} C^{1/2}_{inv,2}\Delta t}{h^{1/2}} \| \nabla {u'}^{n}_{h} \| \| \nabla \psi^{n+1}_{h} \| \| \psi^{n+1}_{h}-\psi^{n}_{h}\|
\\ &\leq \frac{2C_{6}^{2} C_{inv,2} \Delta t^2}{h} \| \nabla {u'}^{n}_{h} \|^{2} \| \nabla \psi^{n+1}_{h} \|^{2} + \frac{1}{2} \| \psi^{n+1}_{h}-\psi^{n}_{h}\|^{2}.\notag
\end{align}
Use the Cauchy-Schwarz-Young inequality on the first term.  Apply Lemma \ref{l1}, interpolant estimates, and Taylor's theorem on the remaining.  Then,
\begin{align}
2\Delta tb^{\ast}(u^{n+1}-u^{n},\tau,\psi^{n+1}_{h}) &\leq C_{3} \| \nabla (u^{n+1}-u^{n}) \| \|\nabla \tau\| \| \nabla \psi^{n+1}_{h} \| 
\\ &\leq \frac{4C_{r}C_{3}^2 C_{tr}^{2} |\Gamma_{N}| \Delta t^{2}}{\delta_{10}} \| \nabla u_{t} \|^{2}_{L^{2}(t^{n},t^{n+1};L^{2}(\Omega))} + \frac{\delta_{10}\Delta t}{r} \|\nabla \psi^{n+1}_{h} \|, \notag
\\ 2\Delta tb^{\ast}(u^{n},\tau - I_{h}\tau,\psi^{n+1}_{h}) &\leq C_{3}\| \nabla u^{n}\| \|\nabla (\tau - I_{h}\tau) \| \| \nabla \psi^{n+1}_{h} \|
\\ &\leq \frac{4C_{r} C_{3}^{2}\Delta t}{\delta_{13}}\| \nabla u^{n} \|^{2} \|\nabla \tau - I_{h}\tau \|^{2} + \frac{\delta_{13}\Delta t}{r} \| \nabla \psi^{n+1}_{h} \|^{2}, \notag
\\ 2\Delta tb^{\ast}(\eta^{n},I_{h}\tau,\psi^{n+1}_{h}) &\leq C_{4}\|\nabla I_{h}\tau \| \| \nabla \psi^{n+1}_{h}\| \sqrt{\| \eta^{n}\| \| \nabla \eta^{n}\|}  
\\ &\leq \frac{4C_{r} C_{4}^{2}C_{I}^{2}C_{tr}^{2}|\Gamma_{N}|\Delta t}{\delta_{11}} \| \eta^{n}\| \| \nabla \eta^{n}\| + \frac{\delta_{11}\Delta t}{r} \| \nabla \psi^{n+1}_{h} \|^{2}, \notag
\\ 2\Delta t(\nabla (\tau - I_{h}\tau),\nabla \psi^{n+1}_{h}) &\leq \frac{4C_{r}\Delta t}{\delta_{3}} \|\nabla (\tau - I_{h}\tau)\|^{2} + \frac{\delta_{3}\Delta t}{r} \| \nabla \psi^{n+1}_{h} \|^{2}.
\end{align}
The Cauchy-Schwarz-Young inequality, Poincar\'{e}-Friedrichs inequality and Taylor's theorem yield
\begin{align}
2(\zeta^{n+1} - \zeta^{n}, \psi^{n+1}_{h}) \leq \frac{4C^{2}_{PF,2} C_{r}}{\delta_1} \| \zeta_{t} \|^{2}_{L^{2}(t^{n},t^{n+1};L^{2}(\Omega))} + \frac{\delta_1 \Delta t}{r} \| \nabla \psi^{n+1}_{h} \|^{2}.
\end{align}
Lastly, use the Cauchy-Schwarz-Young inequality,
\begin{align}
2\Delta t(\nabla \zeta^{n+1},\nabla \psi^{n+1}_{h}) \leq \frac{4C_{r}\Delta t}{\delta_2} \| \nabla \zeta^{n+1} \|^{2} + \frac{\delta_2 \Delta t}{r} \| \nabla \psi^{n+1}_{h} \|^{2}.
\end{align}
Similar estimates follow for the r.h.s. terms in (\ref{feu1}), however, we must treat additional error terms associated with the temperature,
\begin{align}
- 2PrRa\Delta t(\xi \zeta^{n},\phi^{n+1}_{h}) &\leq \frac{4Pr^{2} Ra^{2} C^{2}_{PF,1} C_{r}\Delta t}{\delta_{16}}\| \zeta^{n} \|^{2} + \frac{\delta_{16}\Delta t}{r} \| \nabla \phi^{n+1}_{h} \|^{2},
\\ 2PrRa\Delta t(\xi \psi^{n}_{h},\phi^{n+1}_{h}) &\leq \frac{4Pr^{2} Ra^{2} C^{2}_{PF,1} C_{r}\Delta t}{\delta_{17}}\|\psi^{n}_{h}\|^{2} + \frac{\delta_{17}\Delta t}{r} \| \nabla \phi^{n+1}_{h} \|^{2},
\\ - 2PrRa\Delta t(\xi (\tau - I_{h}\tau),\phi^{n+1}_{h}) &\leq \frac{4Pr^{2} Ra^{2} C^{2}_{PF,1} C_{r}\Delta t}{\delta_{18}}\| \tau - I_{h}\tau \|^{2} + \frac{\delta_{18}\Delta t}{r} \| \nabla \phi^{n+1}_{h} \|^{2}.
\end{align}
Consider equation (\ref{fep1}).  Add and subtract $2\epsilon (\lambda^{n+1} - \lambda^{n},\pi^{n}_{h})$ and $-2\Delta t \upsilon_{p}(p^{n+1},\pi^{n}_{h})$.  Use Taylor's theorem and the Cauchy-Schwarz-Young inequality.  This leads to
\begin{align}
2\epsilon (\lambda^{n+1} - \lambda^{n},\pi^{n+1}_{h}) &= 2\epsilon (\lambda^{n+1} - \lambda^{n},\pi^{n+1}_{h}-\pi^{n}_{h}) + 2\epsilon (\lambda^{n+1} - \lambda^{n},\pi^{n}_{h}) 
\\ &\leq \frac{4\epsilon C_{r}\Delta t^{2}}{\delta_{26}} \| \lambda_{t} \|^{2}_{L^{2}(t^{n},t^{n+1};L^{2}(\Omega))} + \frac{\epsilon \delta_{26}}{r} \| \pi^{n+1}_{h}-\pi^{n}_{h} \|^{2}\notag
\\ &+ \frac{4\epsilon C_{r}\Delta t}{\delta_{27}} \| \lambda_{t} \|^{2}_{L^{2}(t^{n},t^{n+1};L^{2}(\Omega))} + \frac{\epsilon \delta_{27}\Delta t}{r} \| \pi^{n}_{h} \|^{2},\notag
\\ -2\Delta t \varsigma_{p}(p^{n+1},\pi^{n+1}_{h})  &\leq \frac{4\epsilon C_{r}\Delta t^{2}}{\delta_{28}} \| p_{t} \|^{2}_{L^{2}(t^{n},t^{n+1};L^{2}(\Omega))} + \frac{\epsilon \delta_{28}}{r} \| \pi^{n+1}_{h}-\pi^{n}_{h} \|^{2}
\\ &+ \frac{4\epsilon C_{r}\Delta t}{\delta_{29}} \| p_{t} \|^{2}_{L^{2}(t^{n},t^{n+1};L^{2}(\Omega))} + \frac{\epsilon \delta_{29}\Delta t}{r} \| \pi^{n}_{h} \|^{2}.\notag
\end{align}
Add equations (\ref{fet1}) - (\ref{fep1}) together.  Apply the above estimates and Lemma \ref{consistency}.  Let $r = 40$ and choose $\sum^{14}_{i\neq6,12} \delta_{i} = 10$, $\delta_{6} = \delta_{12} = 1/8$, $\sum^{25}_{i\neq21} \delta_{i} = 10$, $\delta_{21} = 1/8$, and $\delta_{26}=\delta_{28}=10$.  Moreover, let $\sigma_{6} = \frac{\delta_{6}}{12C^{2}_{4}C^{2}_{\theta}}$, $\sigma_{12} = \frac{\delta_{12}}{12C^{2}_{4}C^{2}_{I}C^{2}_{tr}|\Gamma_{N}|}$, and $\sigma_{21} = \frac{\delta_{21}}{12C_{2}^{2} C_{u}^{2}}$.  Reorganize, use condition (\ref{c1}), relation (\ref{c2}), and Theorem \ref{t1}.  Take the maximum over all constants associated with $\| \psi^{n}_{h} \|$, $\| \phi^{n}_{h} \|$, and $\| \pi^{n}_{h} \|$ on the r.h.s.  Lastly, take the maximum over all remaining constants on the r.h.s.  Then,
\begin{multline} 
\Big\{\|\psi^{n+1}_{h}\|^{2} - \|\psi^{n}_{h}\|^{2}\Big\} + \Big\{\|\phi^{n+1}_{h}\|^{2} - \|\phi^{n}_{h}\|^{2}\Big\} + \gamma^{-1} \Delta t \Big\{ \|\pi^{n+1}_{h}\|^{2} - \|\pi^{n}_{h}\|^{2}\Big\}
\\ + \frac{1}{2} \Big\{\|\psi^{n+1}_{h}-\psi^{n}_{h}\|^{2} + \|\phi^{n+1}_{h}-\phi^{n}_{h}\|^{2} + \gamma^{-1} \Delta t \|\pi^{n+1}_{h} - \pi^{n}_{h}\|^{2}\Big\} + \frac{\Delta t}{2} \Big\{\|\nabla \psi^{n+1}_{h}\|^{2} + Pr \|\nabla \phi^{n+1}_{h}\|^{2}\Big\}
\\ + \frac{Pr\Delta t}{4} \Big\{\|\nabla \phi^{n+1}_{h}\|^{2} - \|\nabla \phi^{n}_{h}\|^{2} \Big\} \leq C_{\star} \Delta t \Big\{ \| \psi^{n}_{h} \| + \| \phi^{n}_{h} \| + \gamma^{-1}\Delta t\| \pi^{n}_{h} \|\Big\} + C\Delta t \Big\{ \frac{1}{\Delta t}\| \zeta_{t} \|^{2}_{L^{2}(t^{n},t^{n+1};L^{2}(\Omega))}
\\ + \frac{1}{\Delta t}\| \eta_{t} \|^{2}_{L^{2}(t^{n},t^{n+1};L^{2}(\Omega))} + \Delta t\| \lambda_{t} \|^{2}_{L^{2}(t^{n},t^{n+1};L^{2}(\Omega))} + h\| \nabla \zeta_{t} \|^{2}_{L^{2}(t^{n},t^{n+1};L^{2}(\Omega))} + h\| \nabla \eta_{t} \|^{2}_{L^{2}(t^{n},t^{n+1};L^{2}(\Omega))}
\\  + \| \zeta^{n} \|^{2} + \| \nabla \zeta^{n+1} \|^{2} + \| \zeta^{n+1} \| \| \nabla \zeta^{n+1}\| + \| \eta^{n} \| \| \nabla \eta^{n}\| + \| \eta^{n+1} \| \| \nabla \eta^{n+1}\| + \|\tau - I_{h}\tau\|^{2} + \|\nabla (\tau - I_{h}\tau)\|^{2} 
\\ + \Delta t \| \theta_{t} \|^{2}_{L^{2}(t^{n},t^{n+1};L^{2}(\Omega))} + \Delta t \| p_{t} \|^{2}_{L^{2}(t^{n},t^{n+1};L^{2}(\Omega))} + \Delta t \| \theta_{tt} \|^{2}_{L^{2}(t^{n},t^{n+1};L^{2}(\Omega))} + \Delta t \| u_{tt} \|^{2}_{L^{2}(t^{n},t^{n+1};L^{2}(\Omega))} 
\\ + (h + \Delta t)\| \nabla \theta_{t} \|^{2}_{L^{2}(t^{n},t^{n+1};L^{2}(\Omega))} + (h + \Delta t)\| \nabla u_{t} \|^{2}_{L^{2}(t^{n},t^{n+1};L^{2}(\Omega))}\|\Big\}.
\end{multline}
Sum from $n = 0$ to $n = N-1$, apply Lemmas \ref{l3} and \ref{l2}, take infimums over $X_{h}$, $Q_{h}$, and $\hat{W_{h}}$, and renorm. Then,
\begin{multline*}
\|\psi^{N}_{h}\|^{2} + \|\phi^{N}_{h}\|^{2} + \gamma^{-1} \Delta t \|\pi^{N}_{h}\|^{2} + \frac{1}{2}\sum^{N-1}_{n=0} \Big\{\|\psi^{n+1}_{h}-\psi^{n}_{h}\|^{2} + \|\phi^{n+1}_{h}-\phi^{n}_{h}\|^{2} + \gamma^{-1} \Delta t \|\pi^{n+1}_{h} - \pi^{n}_{h}\|^{2}\Big\}
\\  + \frac{Pr\Delta t}{4} \|\nabla \phi^{N}_{h}\|^{2} + \frac{1}{2}\vertiii{\nabla \psi_{h}}^{2}_{2,0} + \frac{Pr}{2} \vertiii{\nabla \phi_{h}}^{2}_{2,0} \leq C exp(C_{\star} t^{\ast}) \inf_{\substack{v_{h} \in X_{h}\\q_{h} \in Q_{h}\\S_{h} \in \hat{W}_{h}}} \Big\{ \| \zeta_{t} \|^{2}_{L^{2}(0,t^{\ast};L^{2}(\Omega))}
\\ + \| \eta_{t} \|^{2}_{L^{2}(0,t^{\ast};L^{2}(\Omega))} + \Delta t^{2} \| \lambda_{t} \|^{2}_{L^{2}(0,t^{\ast};L^{2}(\Omega))} + h\Delta t \| \nabla \zeta_{t} \|^{2}_{L^{2}(0,t^{\ast};L^{2}(\Omega))} + h \Delta t \| \nabla \eta_{t} \|^{2}_{L^{2}(0,t^{\ast};L^{2}(\Omega))}
\\ + \vertiii{\zeta}^{2}_{2,0} + \vertiii{\nabla \zeta}^{2}_{2,0} + \vertiii{\zeta}_{2,0} \vertiii{\nabla \zeta}_{2,0} + \vertiii{\eta}_{2,0} \vertiii{\nabla \eta}_{2,0} + t^{\ast}\big(\|\tau - I_{h}\tau\|^{2} + \|\nabla (\tau - I_{h}\tau)\|^{2}\big)
\\ + \Delta t^{2} \| \theta_{t} \|^{2}_{L^{2}(0,t^{\ast};L^{2}(\Omega))} + \Delta t^{2} \| p_{t} \|^{2}_{L^{2}(0,t^{\ast};L^{2}(\Omega))} + \Delta t^{2} \| \theta_{tt} \|^{2}_{L^{2}(0,t^{\ast};L^{2}(\Omega))} + \Delta t^{2} \| u_{tt} \|^{2}_{L^{2}(0,t^{\ast};L^{2}(\Omega))}
\\ + (h + \Delta t)\Delta t \| \nabla \theta_{t} \|^{2}_{L^{2}(0,t^{\ast};L^{2}(\Omega))} + (h + \Delta t) \Delta t \| \nabla u_{t} \|^{2}_{L^{2}(0,t^{\ast};L^{2}(\Omega))} + \|\psi^{0}_{h}\|^{2} + \|\phi^{0}_{h}\|^{2}
\\ + \gamma^{-1} \Delta t \|\pi^{0}_{h}\|^{2} + \frac{Pr\Delta t}{4}\|\nabla \phi^{0}_{h}\|^{2}\Big\}.
\end{multline*}
Assuming $\|\psi^{0}_{h}\|=\|\phi^{0}_{h}\|=\|\pi^{0}_{h}\|=\|\nabla \phi^{0}_{h}\|=0$, the result follows by the relationship $e^{n}_{T} = e^{n}_{\theta} + \tau - I_{h}\tau$, the triangle inequality, and absorbing constants.
\end{proof}
The following corollary holds for Taylor-Hood elements.
\begin{corollary}
	Suppose the assumptions of Theorem \ref{t1} hold with $k=m=2$.  Further suppose that the finite element spaces ($X_{h}$,$Q_{h}$,$W_{h}$) are given by P2-P1-P2 (Taylor-Hood), then the errors in velocity and temperature satisfy
	\begin{multline*}
\frac{1}{2}\|e^{N}_{T}\|^{2} + \|e^{N}_{u}\|^{2} + \gamma^{-1} \Delta t \|e^{N}_{p}\|^{2} + \frac{1}{2}\sum^{N-1}_{n=0} \Big\{\|e^{n+1}_{T}-e^{n}_{T}\|^{2} + \|e^{n+1}_{u}-e^{n}_{u}\|^{2} + \gamma^{-1} \Delta t \|e^{n+1}_{p} - e^{n}_{p}\|^{2}\Big\}
\\  + \frac{Pr\Delta t}{4} \|\nabla e^{N}_{u}\|^{2} + \frac{1}{4}\vertiii{\nabla e_{T}}^{2}_{2,0} + \frac{Pr}{2} \vertiii{\nabla e_{u}}^{2}_{2,0} \leq C exp(C_{\star} t^{\ast}) \Big\{ h^{6} + h^{6}\Delta t^{2} + h^{6} \Delta t + h^{5} + h^{4} + h\Delta t + \Delta t^{2} 
\\ + \|e^{0}_{T}\|^{2} + \|e^{0}_{u}\|^{2} + \gamma^{-1} \Delta t \|e^{0}_{p}\|^{2} + \frac{Pr\Delta t}{4}\|\nabla e^{0}_{u}\|^{2}\Big\}.
	\end{multline*}
\end{corollary}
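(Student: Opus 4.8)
The plan is to read the corollary off Theorem \ref{t2} by substitution: the right-hand side there is already organized as an infimum over $v_h\in X_h$, $q_h\in Q_h$, $S_h\in\hat{W}_h$ of best-approximation quantities, so once the P2-P1-P2 convergence rates are inserted the estimate collapses to an explicit polynomial in $h$ and $\Delta t$. I would therefore fix, at each time level, $v_h$, $q_h$, and $S_h$ to be the best approximations realizing (\ref{a1})-(\ref{a3}) with $k=m=2$, invoke the assumed regularity (\ref{error:regularity}), and use the standard interpolation estimate for the boundary extension $\tau$; the velocity and pressure contributions may equivalently be understood through the Stokes projection of Lemma \ref{l2}, whose error is controlled by the same best-approximation rates.

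The execution is pure bookkeeping once the two governing rates are identified: a quantity measured in $L^{2}(\Omega)$ (or in the $\vertiii{\cdot}_{2,0}$ norm) converges at rate $h^{k+1}=h^{3}$, hence $h^{6}$ after squaring, while a quantity carrying a gradient converges at rate $h^{k}=h^{2}$, hence $h^{4}$ after squaring. Thus $\|(T-S_h)_t\|^{2}$, $\|(u-v_h)_t\|^{2}$, and $\vertiii{T-S_h}^{2}_{2,0}$ each give $h^{6}$; the terms $\vertiii{\nabla(T-S_h)}^{2}_{2,0}$ and $t^{*}\|\nabla(\tau-I_h\tau)\|^{2}$ each give $h^{4}$; the mixed products $\vertiii{T-S_h}_{2,0}\vertiii{\nabla(T-S_h)}_{2,0}$ and $\vertiii{u-v_h}_{2,0}\vertiii{\nabla(u-v_h)}_{2,0}$ give $h^{3}\cdot h^{2}=h^{5}$; and the weighted terms $h\Delta t\|\nabla(\cdot)_t\|^{2}$ produce higher-order $\Delta t$-weighted contributions. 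The explicit splitting terms $\Delta t^{2}$ and $h\Delta t$ pass through unchanged, the initial-error quantities are carried over verbatim, and collecting then yields the stated bound after absorbing constants into $C$.

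The one step requiring genuine care is the pressure time-derivative term $\Delta t^{2}\|(p-q_h)_t\|^{2}_{L^{2}(0,t^{*};L^{2}(\Omega))}$. Since (\ref{error:regularity}) places $p_t$ only in $L^{2}(0,t^{*};Q)$, with no additional spatial smoothness, no positive power of $h$ can be extracted; instead I would take $q_h=0$ in the infimum and bound this contribution by $\Delta t^{2}\|p_t\|^{2}_{L^{2}(0,t^{*};L^{2}(\Omega))}=\mathcal{O}(\Delta t^{2})$. This is the only estimate that behaves qualitatively differently from the rest and is the source of the standalone $\Delta t^{2}$ term; with it in hand, the dominant $h^{4}$, $h\Delta t$, and $\Delta t^{2}$ contributions exhibit the claimed accuracy while the $h^{5}$ and $h^{6}$ pieces are absorbed as higher order.
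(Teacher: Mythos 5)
Your proposal is correct and is essentially the paper's (implicit) argument: the corollary is stated without proof precisely because it follows from Theorem \ref{t2} by inserting the approximation rates (\ref{a1})--(\ref{a3}) with $k=m=2$ and the Stokes projection estimate of Lemma \ref{l2}, exactly as you do. Your treatment of the pressure term $\Delta t^{2}\|(p-q_{h})_{t}\|^{2}$ via $q_{h}=0$ is in fact more careful than the paper's bookkeeping (which tags that and the $h\Delta t\|\nabla(\cdot)_{t}\|^{2}$ terms as $h^{6}\Delta t^{2}$ and $h^{6}\Delta t$ rather than $\Delta t^{2}$ and $h^{5}\Delta t$), but since all such terms are dominated by $h^{4}+h\Delta t+\Delta t^{2}$ the stated bound holds either way.
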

Similarly, for the MINI element, the following holds.
\begin{corollary}
	Suppose the assumptions of Theorem \ref{t1} hold with $k=m=1$.  Further suppose that the finite element spaces ($X_{h}$,$Q_{h}$,$W_{h}$) are given by P1b-P1-P1b (MINI element), then the errors in velocity and temperature satisfy
	\begin{multline*}
\frac{1}{2}\|e^{N}_{T}\|^{2} + \|e^{N}_{u}\|^{2} + \gamma^{-1} \Delta t \|e^{N}_{p}\|^{2} + \frac{1}{2}\sum^{N-1}_{n=0} \Big\{\|e^{n+1}_{T}-e^{n}_{T}\|^{2} + \|e^{n+1}_{u}-e^{n}_{u}\|^{2} + \gamma^{-1} \Delta t \|e^{n+1}_{p} - e^{n}_{p}\|^{2}\Big\}
\\  + \frac{Pr\Delta t}{4} \|\nabla e^{N}_{u}\|^{2} + \frac{1}{4}\vertiii{\nabla e_{T}}^{2}_{2,0} + \frac{Pr}{2} \vertiii{\nabla e_{u}}^{2}_{2,0} \leq C exp(C_{\star} t^{\ast}) \Big\{ h^{4} + h^{4}\Delta t^{2} + h^{4} \Delta t + h^{3} + h^{2} + h\Delta t + \Delta t^{2} 
\\ + \|e^{0}_{T}\|^{2} + \|e^{0}_{u}\|^{2} + \gamma^{-1} \Delta t \|e^{0}_{p}\|^{2} + \frac{Pr\Delta t}{4}\|\nabla e^{0}_{u}\|^{2}\Big\}.
	\end{multline*}
\end{corollary}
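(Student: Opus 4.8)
The plan is to obtain this corollary as a direct specialization of Theorem \ref{t2}: that theorem already bounds the full discrete energy norm of the errors by $C\exp(C_\star t^\ast)$ times an infimum of a \emph{fixed} collection of approximation functionals over $X_h\times Q_h\times \hat{W}_h$, so no new energy argument is needed. The only remaining work is to supply one admissible choice of approximants for the P1b-P1-P1b pair, invoke the approximation properties (\ref{a1})--(\ref{a3}) with $k=m=1$, and read off the resulting powers of $h$ and $\Delta t$. I would first fix $v_h$ and $S_h$ to be the time-dependent interpolants of $u$ and $T$ and $q_h$ a best $L^2$ approximation of $p$, so that (\ref{a1}) and (\ref{a3}) give $\|u-v_h\|+\|T-S_h\|\le Ch^2$ together with $\|\nabla(u-v_h)\|+\|\nabla(T-S_h)\|\le Ch$, while (\ref{a2}) gives $\|p-q_h\|\le Ch$. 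Since the spatial interpolation operators commute with $\partial_t$, the regularity assumptions (\ref{error:regularity}), in particular $u_t,T_t\in L^2(0,t^\ast;H^{k+1})$, transfer the same rates to the temporal-derivative errors, namely $\|(u-v_h)_t\|+\|(T-S_h)_t\|\le Ch^2$ and $\|\nabla(u-v_h)_t\|+\|\nabla(T-S_h)_t\|\le Ch$. I would also record the interpolation bounds $\|\tau-I_h\tau\|\le Ch^2$ and $\|\nabla(\tau-I_h\tau)\|\le Ch$ for the boundary lifting.

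Substituting these rates term by term into the right-hand side of Theorem \ref{t2} then produces the stated estimate after collecting like monomials. The purely $L^2$-in-time velocity and temperature terms $\|(u-v_h)_t\|^2$, $\|(T-S_h)_t\|^2$ and the term $\vertiii{T-S_h}^2_{2,0}$ each contribute $h^4$; the mixed products $\vertiii{T-S_h}_{2,0}\vertiii{\nabla(T-S_h)}_{2,0}$ and $\vertiii{u-v_h}_{2,0}\vertiii{\nabla(u-v_h)}_{2,0}$ contribute $h^3$; and the gradient terms $\vertiii{\nabla(T-S_h)}^2_{2,0}$ together with $\|\nabla(\tau-I_h\tau)\|^2$ supply the \emph{dominant} spatial order $h^2$. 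The $h\Delta t$-weighted temporal-gradient terms give $h^3\Delta t$, and the remaining $\Delta t$-weighted contributions produce the higher-order monomials $h^4\Delta t$ and $h^4\Delta t^2$; all of these are dominated by the $h^2$ and $\Delta t^2$ terms and merely fill out the stated sum. The consistency remainders collapse to $\Delta t^2+h\Delta t$ exactly as in Theorem \ref{t2}, and the initial-data terms $\|e^0_T\|^2,\|e^0_u\|^2,\gamma^{-1}\Delta t\|e^0_p\|^2,\tfrac{Pr\Delta t}{4}\|\nabla e^0_u\|^2$ pass through unchanged. The net bound is $C\exp(C_\star t^\ast)\{h^4+h^4\Delta t^2+h^4\Delta t+h^3+h^2+h\Delta t+\Delta t^2+\text{initial}\}$, whence $\|e^N_u\|,\|e^N_T\|=O(h+\Delta t)$, confirming first-order convergence (the Taylor--Hood case is identical with $k=m=2$, shifting every spatial exponent up by two).

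The one point requiring genuine care, and the main obstacle, is the pressure time-derivative term $\Delta t^2\|(p-q_h)_t\|^2$. The regularity (\ref{error:regularity}) supplies only $p_t\in L^2(0,t^\ast;Q)$ with no spatial smoothness, so no $h$-rate is available for $\|(p-q_h)_t\|$ and this factor must be bounded simply by a constant; this is precisely the origin of the isolated $\Delta t^2$ term, and it is also what prevents the pressure from improving the temporal order. Apart from this, the proof is careful bookkeeping: one must verify which of the many $h^a\Delta t^b$ monomials dominates so as not to overstate the order, and confirm that the timestep condition (\ref{c1}) and the relation (\ref{c2}) invoked inside the proof of Theorem \ref{t2} are compatible with the quasi-uniform MINI mesh, so that the constant $C_\star$ and the constant absorbed into $C$ remain independent of $h$ and $\Delta t$.
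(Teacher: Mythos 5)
Your proposal is correct and follows exactly the route the paper intends: the corollary is stated without proof as a direct specialization of Theorem \ref{t2}, obtained by inserting the approximation properties (\ref{a1})--(\ref{a3}) with $k=m=1$ into the infimum on the right-hand side and collecting the resulting powers of $h$ and $\Delta t$. Your observations that the isolated $\Delta t^{2}$ term traces back to the pressure time-derivative (for which the regularity (\ref{error:regularity}) supplies no spatial rate) and that the dominant monomials $h^{2}+h\Delta t+\Delta t^{2}$ yield the claimed first-order accuracy are consistent with the paper.
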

 \begin{figure}
 	\includegraphics[width=\textwidth,height=\textheight,keepaspectratio]{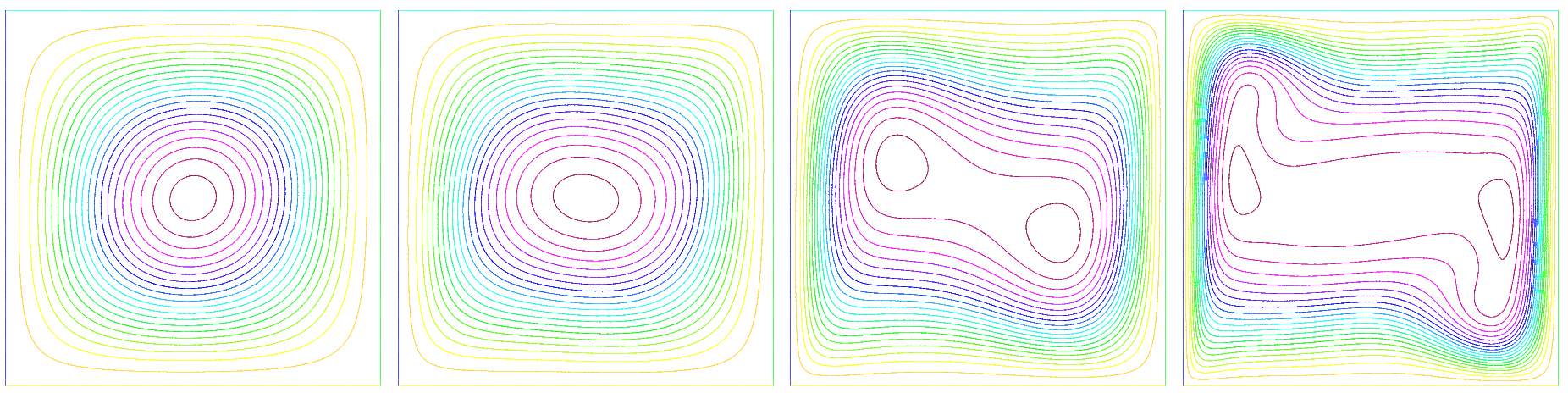}
 	\caption{Streamlines: $Ra = 10^3, 10^4, 10^5,$ and $10^6$, left to right.}\label{figure=streamlines}
 \end{figure}
 \begin{figure}
 	\includegraphics[width=\textwidth,height=\textheight,keepaspectratio]{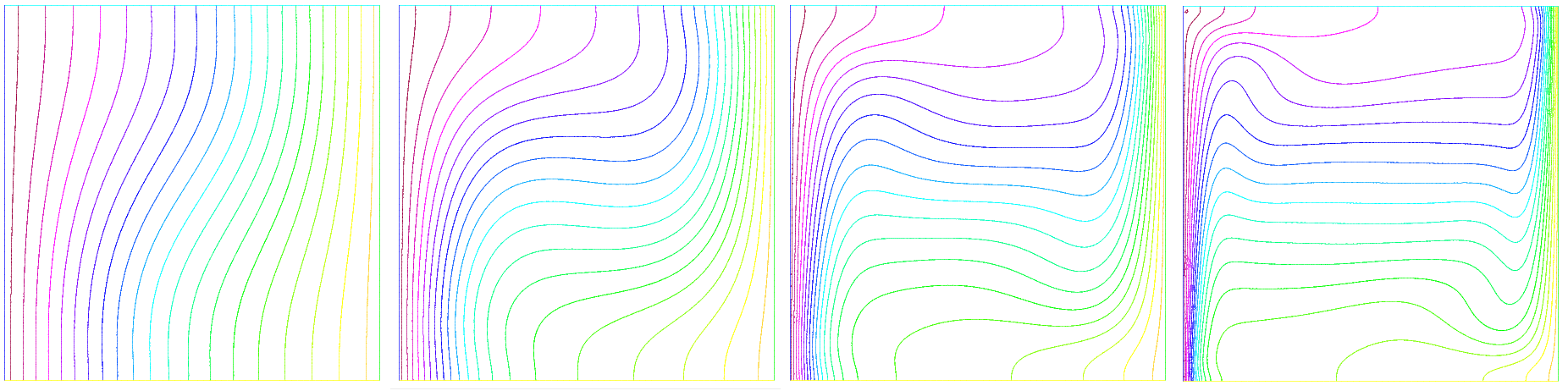}
 	\caption{Isotherms: $Ra = 10^3, 10^4, 10^5,$ and $10^6$, left to right.}\label{figure=isotherms}
 \end{figure}
\section{Numerical Experiments}\label{numerics}
In this section, we illustrate the speed, stability, and convergence of ACE described by (\ref{scheme:one:velocity}) - (\ref{scheme:one:pressure}) using Taylor-Hood (P2-P1-P2) elements to approximate the average velocity, pressure, and temperature.  The numerical experiments include the double pane window benchmark \cite{Davis}, a convergence experiment with an analytical solution devised through the method of manufactured solutions, and a predictability experiment.  In particular, ACE is shown to be 3 to 8 times faster than linearly implicit BDF1 in Section \ref{Davisproblem}.  First-order accuracy is illustrated in Section \ref{Convergencestudy}.  Lastly, in Section \ref{Predictability}, we calculate $\delta$-predictability horizons and variance to study the predictability of an unstable solution.  The software platform used for all tests is \textsc{FreeFem}$++$ \cite{Hecht}.  
\subsection{Stability condition}
Recall that ACE is stable provided condition (\ref{c1}) holds:
\begin{align*}
\frac{C_{\dagger} \Delta t}{h} \max_{1 \leq j \leq J} \|\nabla {u'}^{n}_{h}\|^{2} \leq 1.
\end{align*}
The stability constant $C_{\dagger}$ is determined via pre-computations for the double pane window benchmark; it is set to 0.35.  Condition (\ref{c1}) is checked at each timestep.  The timestep is halved and the timestep is repeated if (\ref{c1}) violated.  The timestep is never increased.  The condition is violated three times in Section \ref{Davisproblem} for $Ra = 10^6$.  

\textbf{Remark:}  Although $C_{\dagger}$ is estimated to be 1, it is set to 0.35.  This is done to reduce the timestep when $Ra = 10^6$.  At this value of $Ra$, the stopping condition is not met unless the timestep is reduced.  Instead, the solution appears to reach a false quasi-periodic solution.  This occurs for linearly implicit BDF1 and variants and may be related to the conditional Lyapunov stability of these methods \cite{Sussman}.  This is currently under investigation.

\subsection{Perturbation generation}\label{BV}
In Section \ref{Convergencestudy}, a positive and negative perturbation pair is chosen to manufacture a solution with certain properties.  The bred vector (BV) algorithm \cite{Toth} is used to generate perturbations in Sections \ref{Davisproblem} and \ref{Predictability}.  The BV algorithm simulates growth errors due to uncertainty in the initial conditions; this is neccessary and \textit{random perturbations are not sufficient} \cite{Toth}.  As a consequence, the nonlinear error growth in the ensemble average is reduced, which is witnessed in Section \ref{Predictability}.  Our experimental results are drastically different when using BVs compared to random perturbations, consistent with the above.

To begin, an initial random positive and negative perturbation pair is generated, $\pm \epsilon = \pm (\delta_{1},\delta_{2},\delta_{3},\delta_{4})$ with $\delta_{i} \in (0,0.01) \; \forall 1\leq i \leq 4$.  Denoting the control and perturbed numerical approximations $\chi^{n}_{h}$ and $\chi^{n}_{p,h}$, respectively, a bred vector $bv(\chi;\delta_{i})$ is generated via:

\underline{Algorithm: BV}\\
\indent \textbf{Step one:}  Given $\chi^{0}_{h}$ and $\delta_{i}$, put $\chi^{0}_{p,h} = \chi^{0}_{h} + \delta_{i}$.  Select time reinitialization interval $\delta t \geq \Delta t$ and let \indent $t^{k} = k \delta t$ with $0 \leq k \leq k^{\ast} \leq N$.

\indent \textbf{Step two:} Compute $\chi^{k}_{h}$ and $\chi^{k}_{p,h}$.  Calculate $bv(\chi^{k};\delta_{i}) = \frac{\delta_{i}}{\| \chi^{k}_{p,h} - \chi^{k}_{h} \|} (\chi^{k}_{p,h} - \chi^{k}_{h})$.

\indent \textbf{Step three:}  Put $\chi^{k}_{p,h} = \chi^{k}_{h} + bv(\chi^{k};\delta_{i}) $.

\indent \textbf{Step four:} Repeat from \textbf{Step two} with $k = k + 1$.

\indent \textbf{Step five:} Put $bv(\chi;\delta_{i}) = bv(\chi^{k^{\ast}};\delta_{i})$.

\noindent The bred vector pair generates a pair of initial conditions via $\chi_{\pm} = \chi^{0} + bv(\chi;\pm \delta_{i})$.  We let $k^{\ast} = 5$ and choose $\delta t = \Delta t = 0.001$ for all tests.

\begin{figure}
\includegraphics[width=\textwidth,height=\textheight,keepaspectratio]{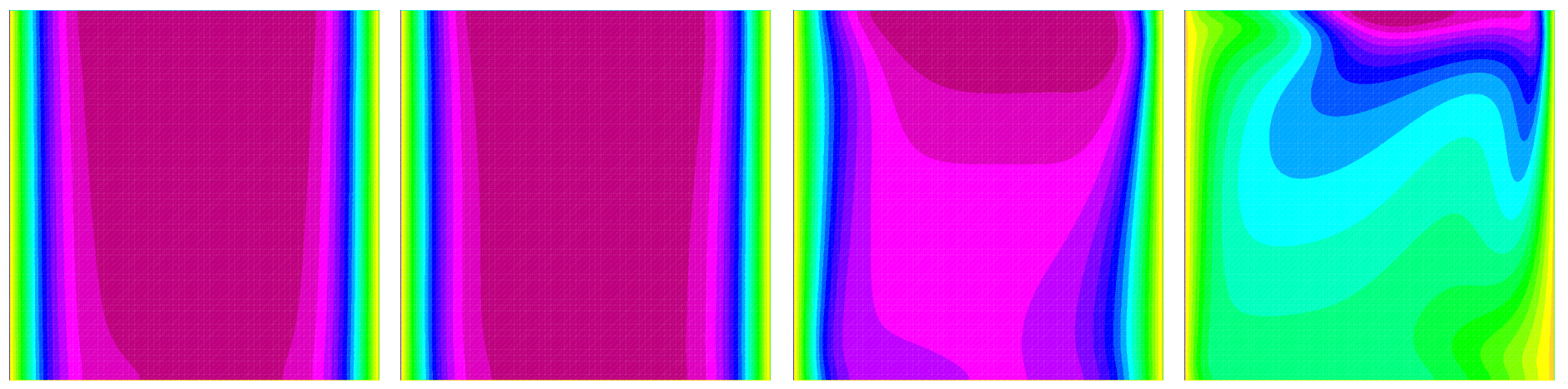}
\caption{BV ($bv(T;+\delta_{3})$): $Ra = 10^3, 10^4, 10^5,$ and $10^6$, left to right.}\label{figure=bvt}
\end{figure}
\subsection{The double pane window problem}\label{Davisproblem}
This is a classic test problem for natural convection.  The problem is the flow of air, $Pr = 0.71$, in a unit square cavity subject to no-slip boundary conditions.  The horizontal walls are adiabatic and vertical wall temperature is maintained at constant temperature \cite{Davis}; see Figure \ref{figure=domain}.  We set $\epsilon = 0.01 \Delta t$.  

We first validate our code.  We set $J=2$ and vary $Ra \in \{10^3,10^4,10^5,10^6\}$.  The finite element mesh is a division of $[0,1]^{2}$ into $64^{2}$ squares with diagonals connected with a line within each square in the same direction.  The initial timestep $\Delta t = 0.001$; it is halved three times for $Ra = 10^{6}$ to $0.000125$.  The initial conditions are generated via the BV algorithm,
\begin{align*}
u_{\pm}(x,y,0) := u(x,y,0;\omega_{1,2}) &= (u^{prev}_{1} + bv(u_{1};\pm \delta_{1}), u^{prev}_{2} + bv(u_{2};\pm \delta_{2}))^{T}, \\
T_{\pm}(x,y,0) := T(x,y,0;\omega_{1,2}) &= T^{prev} + bv(T;\pm \delta_{3}),\\
p_{\pm}(x,y,0) := p(x,y,0;\omega_{1,2}) &= p^{prev} + bv(p;\pm \delta_{4}),
\end{align*}
where the subscript \textit{prev} denotes the solution from the previous value of $Ra$; for $Ra = 10^{3}$, the previous values are all set to 1.  The BV, $bv(T;+\delta_{3})$, is presented in Figure \ref{figure=bvt}.  Forcings are identically zero for $j = 1,2$.  The stopping condition is
\begin{equation*}
\max_{0\leq n \leq N-1}\big\{\frac{\| u^{n+1}_{h} - u^{n}_{h}\|}{\| u^{n+1}_{h}\|},\frac{\|T^{n+1}_{h} - T^{n}_{h}\|}{\| T^{n+1}_{h}\|}\big\} \leq 10^{-5}.
\end{equation*}
The quantities of interest are: $\max_{y \in \Omega_{h}} {u_{1}(0.5,y,t^{\ast})}$, $\max_{x \in \Omega_{h}} {u_{2}(x,0.5,t^{\ast})}$, the local Nusselt number at vertical walls, and average Nusselt number at the hot wall.  The latter two are given by
\begin{align*}
Nu_{local} =  \pm \frac{\partial{T}}{\partial{x}},\\
Nu_{avg} =  \int^{1}_{0} Nu_{local} dy,
\end{align*}
where $\pm$ corresponds to the cold and hot walls, respectively.

Plots of $Nu_{local}$ at the hot and cold walls are presented in Figure \ref{figure=nusselt}.  Computed values of the remaining quantities are presented, alongside several of those seen in the literature, in Tables \ref{table=xhalf} - \ref{table=avgnusselt}.  Figures \ref{figure=streamlines} and \ref{figure=isotherms} present the velocity streamlines and temperature isotherms for the averages.  All results are consistent with benchmark values in the literature \cite{Davis, Manzari, Wan, Cibik, Zhang}.

The second test is a timing test comparing ACE vs. linearly implicit BDF1.  Standard GMRES is used for the velocity and temperature solves..  We set $J=1$ and vary $10^{3} \leq Ra \leq 10^{6}$.  The timestep is chosen to be $\Delta t = 0.001$ for $10^{3} \leq Ra \leq 10^{5}$ and $\Delta t = 0.0001$ for $Ra = 5\times 10^5$ and $10^6$.  The initial conditions are prescribed as in the above.  Results are presented in Figure \ref{figure=speed}.  We see that for $Ra = 10^3$, both algorithms have increased runtimes relative to all other cases.  This is due to the relatively poor choice of initial condition.  Moreover, linearly implicit BDF1 suffers from increased runtime with increasing $Ra$.  However, ACE runtimes remain relatively constant.  Overall, ACE is 3 to 8 times faster for this test problem.

 \begin{figure}
 	\centering
 	\includegraphics[width=5.5in,height=\textheight, keepaspectratio]{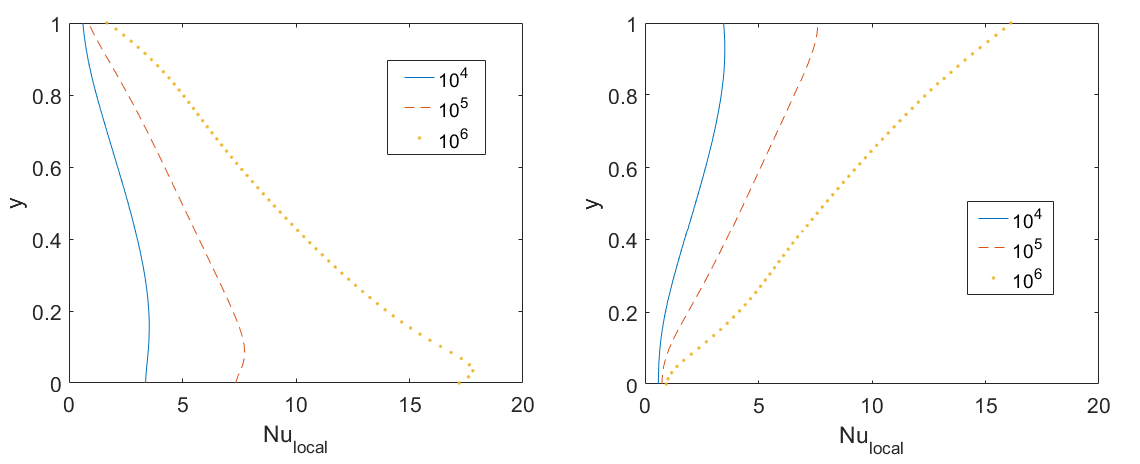}
 	\caption{Variation of the local Nusselt number at the hot (left) and cold walls (right).}\label{figure=nusselt}
 \end{figure}
 \vspace{5mm}
 \begin{adjustbox}{max width=\textwidth}
 	\begin{tabular}{ c  c  c  c  c  c  c  c }
 		\hline			
 		Ra & Present study & Ref. \cite{Davis} & Ref. \cite{Manzari} & Ref. \cite{Wan} & Ref. \cite{Cibik} & Ref. \cite{Zhang} \\
 		\hline
 		$10^{4}$ & 16.16 (64$\times$64) & 16.18 (41$\times$41) & 16.10 (71$\times$71) & 16.10 (101$\times$101) & 15.90 (11$\times$11) & 16.18 (64$\times$64)\\
 		$10^{5}$ & 34.65 (64$\times$64) & 34.81 (81$\times$81) & 34 (71$\times$71) & 34 (101$\times$101) & 33.51 (21$\times$21) & 34.74 (64$\times$64) \\
 		$10^{6}$ & 65.48 (64$\times$64) & 65.33 (81$\times$81) & 65.40 (71$\times$71) & 65.40 (101$\times$101) & 65.52 (32$\times$32) & 64.81 (64$\times$64)\\
 		\hline  
 	\end{tabular}
 \end{adjustbox}
 \captionof{table}{Comparison: maximum horizontal velocity at x = 0.5 \& mesh size, double pane window problem.}\label{table=xhalf}
 \begin{adjustbox}{max width=\textwidth}
 	\begin{tabular}{ c  c  c  c  c  c  c  c }
 		\hline			
 		Ra & Present study & Ref. \cite{Davis} & Ref. \cite{Manzari} & Ref. \cite{Wan} & Ref. \cite{Cibik} & Ref. \cite{Zhang} \\
 		\hline
 		$10^{4}$ & 19.65 (64$\times$64) & 19.51 (41$\times$41) & 19.90 (71$\times$71) & 19.79 (101$\times$101) & 19.91 (11$\times$11) & 19.62 (64$\times$64)\\
 		$10^{5}$ & 68.88 (64$\times$64) & 68.22 (81$\times$81) & 70 (71$\times$71) & 70.63 (101$\times$101) & 70.60 (21$\times$21) & 68.48 (64$\times$64) \\
 		$10^{6}$ & 218.63 (64$\times$64) & 216.75 (81$\times$81) & 228 (71$\times$71) & 227.11 (101$\times$101) & 228.12 (32$\times$32) & 220.44 (64$\times$64)\\
 		\hline  
 	\end{tabular}
 \end{adjustbox}
 \captionof{table}{Comparison: maximum vertical velocity at y = 0.5 \& mesh size, double pane window problem.}\label{table=yhalf}
 \begin{adjustbox}{max width=\textwidth}
 	\begin{tabular}{ c  c  c  c  c  c  c  c }
 		\hline			
 		Ra & Present study & Ref. \cite{Davis} & Ref. \cite{Manzari} & Ref. \cite{Wan} & Ref. \cite{Cibik} & Ref. \cite{Zhang} \\
 		\hline
 		$10^{4}$ & 2.24 (64$\times$64) & 2.24 (41$\times$41) & 2.08 (71$\times$71) & 2.25 (101$\times$101) & 2.15 (11$\times$11) & 2.25 (64$\times$64)\\
 		$10^{5}$ & 4.50 (64$\times$64) & 4.52 (81$\times$81) & 4.30 (71$\times$71) & 4.59 (101$\times$101) & 4.35 (21$\times$21) & 4.53 (64$\times$64) \\
 		$10^{6}$ & 8.77 (64$\times$64) & 8.92 (81$\times$81) & 8.74 (71$\times$71) & 8.97 (101$\times$101) & 8.83 (32$\times$32) & 8.87 (64$\times$64)\\
 		\hline  
 	\end{tabular}
 \end{adjustbox}
 \captionof{table}{Comparison: average Nusselt number on vertical boundary x = 0 \& mesh size, double pane window problem.}\label{table=avgnusselt}
 \begin{figure}
 	\centering
\includegraphics[width=2.85in,height=2.85in, keepaspectratio]{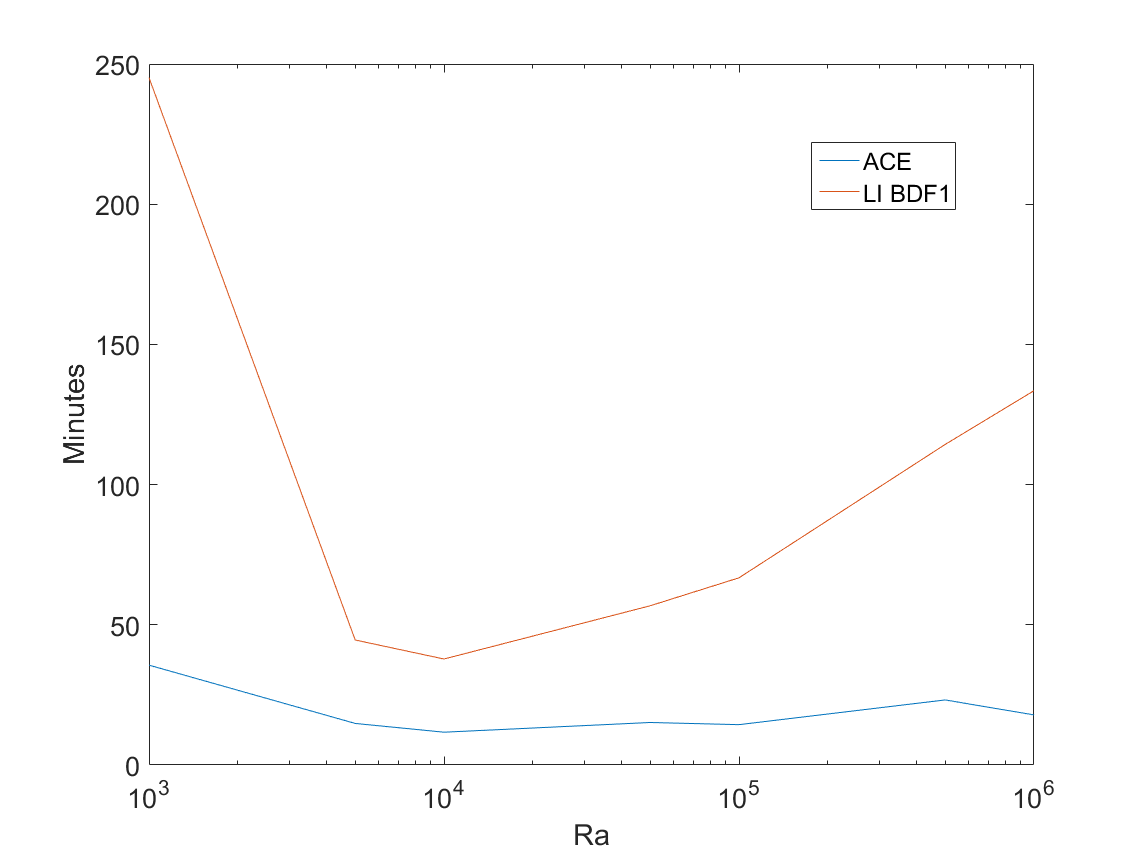} 
 	\caption{Time: ACE vs. standard, coupled linearly implicit BDF1, double pane window benchmark.}\label{figure=speed}
 \end{figure}
\subsection{Numerical convergence study}\label{Convergencestudy}
We now illustrate convergence rates for ACE (\ref{scheme:one:velocity}) - (\ref{scheme:one:temperature}).  The domain and parameters are $\Omega = [0,1]^{2}$, $Pr = 1.0$, and $Ra = 100$.  The unperturbed solution is given by
\begin{align}\label{manufactured1}
u(x,y,t) &= A(t)(x^2(x-1)^2y(y-1)(2y-1), -x(x-1)(2x-1)y^2(y-1)^2)^T, \\
T(x,y,t) &= u_{1}(x,y,t) + u_{2}(x,y,t), \\
p(x,y,t) &= A(t)(2x-1)(2y-1),\label{manufactured3}
\end{align}
with $A(t) = 10\cos{(t)}$.  Perturbed solutions are given by
\begin{align*}
u(x,y,t;\omega_{1,2}) = (1 + \delta_{1,2})u(x,y,t), \\
T(x,y,t;\omega_{1,2}) = (1 + \delta_{1,2})T(x,y,t), \\
p(x,y,t;\omega_{1,2}) = (1 + \delta_{1,2})p(x,y,t), 
\end{align*} 
where $\delta_{1} = 1e-3 = -\delta_{2}$, and satisfy the following relations
\begin{align*}
< u > = 0.5\big(u(x,y,t;\omega_{1}) + u(x,y,t;\omega_{2}) \big) = u(x,y,t), \\
< T > = 0.5\big(T(x,y,t;\omega_{1}) + T(x,y,t;\omega_{2}) \big) = T(x,y,t), \\
< p > = 0.5\big(p(x,y,t;\omega_{1}) + p(x,y,t;\omega_{2}) \big) = p(x,y,t).
\end{align*}
Forcings and boundary conditions are adjusted appropriately.  The mesh is constructed via Delaunay triangulation generated from $m$ points on each side of the boundary.  We calculate errors in the approximations of the average velocity, temperature, and pressure with the $L^{\infty}(0,t^{\ast};L^{2}(\Omega))$ norm.  Rates are calculated from the errors at two successive $\Delta t_{1,2}$ via
\begin{align*}
\frac{\log_{2}(e_{\chi}(\Delta t_{1})/e_{\chi}(\Delta t_{2}))}{\log_{2}(\Delta t_{1}/\Delta t_{2})},
\end{align*}
respectively, with $\chi = u, T, p$.  We set $\Delta{t} = \frac{1}{10m}$ and vary $m$ between 8, 16, 24, 32, and 40.  Results are presented in Table \ref{table=conv}.  First-order convergence is observed for each solution variable.  The results for velocity and temperature are predicted by our theory; however, pressure is a half-power better than predicted.

 \vspace{5mm}
 \begin{center}
 \begin{adjustbox}{max width=\textwidth}
 \begin{tabular}{ c  c  c  c  c  c  c }
 	\hline			
 	$m$ & $\vertiii{ <u_{h}>- u }_{\infty,0}$ & Rate & $\vertiii{ <T_{h}> - T }_{\infty,0}$ & Rate & $\vertiii{ <p_{h}> - p }_{\infty,0}$ & Rate \\
 	\hline
 	8 & 0.0083577 & - & 1.20E-04 & - & 0.15973 & -\\
 	16 & 0.0042676 & 0.97 & 1.51E-05 & 2.99 & 0.073252 & 1.12 \\
 	24 & 0.0028632 & 0.98 & 4.67E-06 & 2.89 & 0.047944 & 1.04 \\
 	32 & 0.0021495 & 1.00 & 2.40E-06 & 2.31 & 0.035660 & 1.03 \\
 	40 & 0.0017263& 0.98 & 1.68E-06 & 1.62 & 0.028505 & 1.00 \\
 	\hline  
 \end{tabular}
 \end{adjustbox}
 \captionof{table}{Errors and rates for average velocity, temperature, and pressure in corresponding norms.}\label{table=conv}
 \end{center}
\subsection{Exploration of predictability}\label{Predictability}
We now illustrate the usefulness of ensembles.  The domain $\Omega$ and $Pr$ are the same as in Section \ref{Convergencestudy}.  We also consider the manufactured solution (\ref{manufactured1}) - (\ref{manufactured3}) with $A(t) = 10(1+0.1t)\cos{(t)}$.  We set $\epsilon = \Delta t$ and vary $Ra \in \{10^2,10^3,10^4\}$.  Forcing and boundary conditions are adjusted appropriately.  Instead of specifying the perturbations on the initial conditions, we utilize the BV algorithm as in Section \ref{Davisproblem}.  The initial timestep is $\Delta t = 0.001$.  The final time $t^{\ast} = 0.1$.  We utilize the following definitions of energy, variance, average effective Lyapunov exponent \cite{Boffetta}, and $\delta$-predictability horizon \cite{Boffetta}.
\begin{definition} The energy is given by
	\begin{align*}
	Energy := \| T \|+ \frac{1}{2}\| u \|^{2}.
	\end{align*}
	The variance of $\chi$ is
	\begin{align*}
	V(\chi) := < \| \chi \|^{2} > - \| <\chi> \|^{2} = < \| \chi'\|^{2} >.
	\end{align*} 
	The relative energy fluctuation is
	\begin{align*}
	r(t) := \frac{\| \chi_{+} - \chi_{-} \|^{2}}{\| \chi_{+} \| \| \chi_{-} \|},
	\end{align*}
	and the \textit{average effective Lyapunov exponent} over $0 < \tau \leq t^{\ast}$ is
	\begin{align*}
	\gamma_{\tau}(t) := \frac{1}{2 \tau}\log\big(\frac{r(t + \tau)}{r(t)}\big),
	\end{align*}
	with $0 < t + \tau \leq t^{\ast}$.  The $\delta$-\textit{predictability horizon} is
	\begin{align*}
	t_{p} := \frac{1}{\gamma_{t^{\ast}}(0)}\log\Big(\frac{\delta}{\| (\chi_{+} - \chi_{-}) (0) \|}\Big).
	\end{align*}
\end{definition}
Figure \ref{figure=energy} presents the energy of the approximate solutions with varying $Ra$.  Variance is presented in Figure \ref{figure=variance}.  In all cases, the ensemble average and unperturbed solution are in close agreement.  Moreover, the perturbed solutions deviate significantly from the unperturbed solution with increasing $Ra$.   Figure \ref{figure=variance}, in particular, indicates that small perturbations in the initial conditions yield unreliable velocity and pressure distributions.  On the other hand, the temperature distribution is reliable throughout the simulation.\\

The average effective Lyapunov exponents are presented in Figure \ref{figure=lyapunov} and $\delta$-predictability horizons are tabulated in Table \ref{table=horizon} for $\delta = e\| (\chi_{+} - \chi_{-}) (0) \|$.  We see that $\gamma_{t^{\ast}}(0)$ is positive for each solution variable, indicating finite time flow predictability.  Moreover, it becomes increasingly larger (reduced predictability) with increasing $Ra$.  For velocity and temperature, $\gamma_{t^{\ast}}(t)$ remains positive, becoming increasingly larger with time; in other words, increasingly unpredictable.  For the pressure, however,  $\gamma_{t^{\ast}}(t)$ becomes and stays negative, indicating increasing predictability.  These results seem to be, in part, inconsistent with the variance plots, Figure \ref{figure=variance}.  It is unclear how to interpret this inconsistency.
 \vspace{2mm}
 \begin{center}
 \begin{adjustbox}{max width=\textwidth}
 \begin{tabular}{ c  c  c  c }
 	\hline			
 	 $Ra$ & u & T & p \\
 	\hline
 	$10^2$ & 0.0214 & 0.0224 & 0.0703 \\
 	$10^3$ & 0.0152 & 0.0223 & 0.0242 \\
 	$10^4$ & 0.0096 & 0.0214 & 0.0134 \\ 
 	\hline  
 \end{tabular}
 \end{adjustbox}
 \captionof{table}{$\delta$-predictability horizons for varying $Ra$.}\label{table=horizon}
 \end{center}
 
 \begin{figure}
	\centering
	\includegraphics[width=\textwidth,height=\textheight,keepaspectratio]{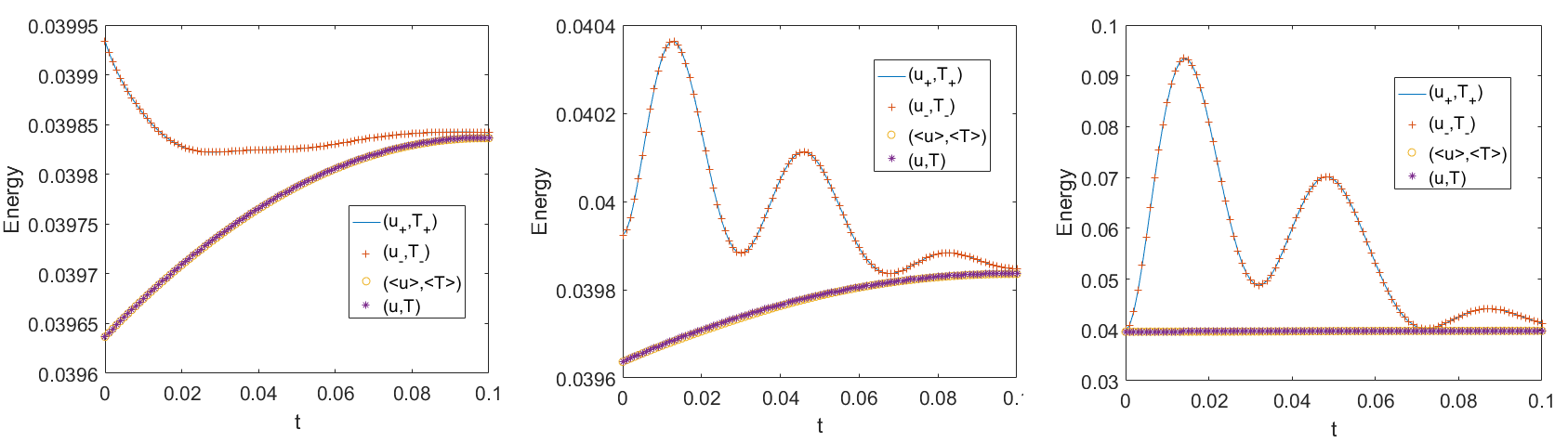}
	\caption{Energy in the system for varying $Ra = 10^2$(left), $10^3$(center), and $10^4$(right).}\label{figure=energy}
\end{figure}
\begin{figure}
	\centering
	\includegraphics[width=\textwidth,height=\textheight,keepaspectratio]{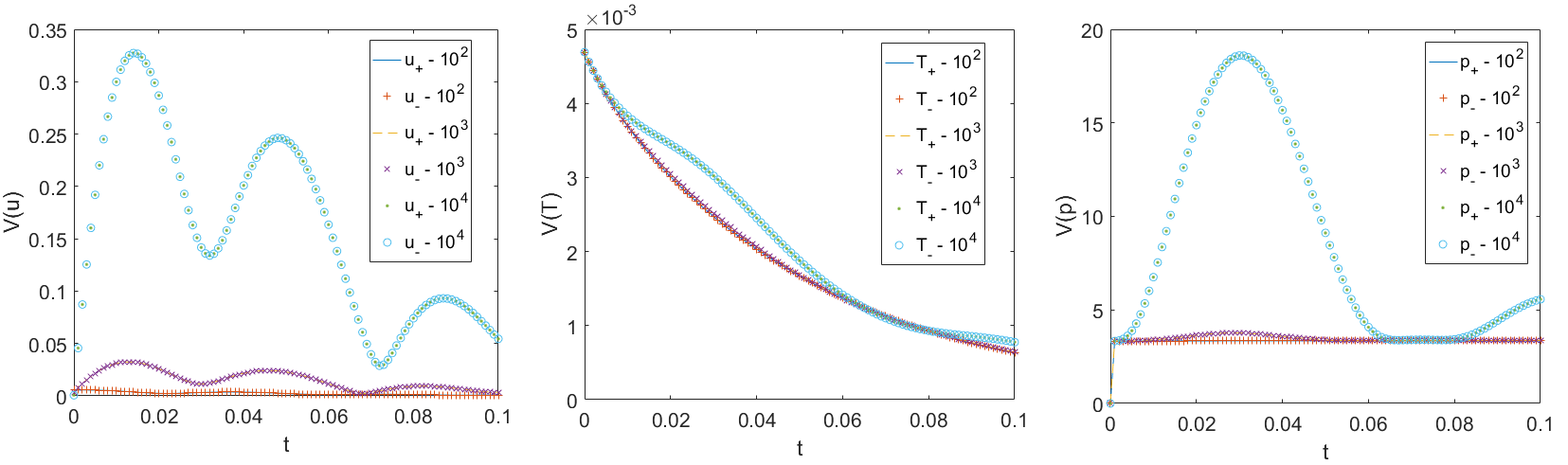}
	\caption{Variance of velocity (left), temperature (center), and pressure (right) with varying $Ra$.}\label{figure=variance}
\end{figure}
\section{Conclusion}
An efficient artificial compressibility ensemble (ACE) algorithm was presented.  Complexity and computation time are reduced compared to similar algorithms in the literature.  This is achieved via a particular IMEX splitting of the convective terms and full velocity, pressure, and temperature decoupling, utilizing artificial compressibility.  Consequently, two linear systems must be solved for multiple right-hand sides and an algebraic update at each timestep are required.  Nonlinear, energy, stability and first-order convergence were proven.  Numerical experiments were performed to illustrate proposed properties.

 \begin{figure}
 	\includegraphics[width=\textwidth,height=\textheight,keepaspectratio]{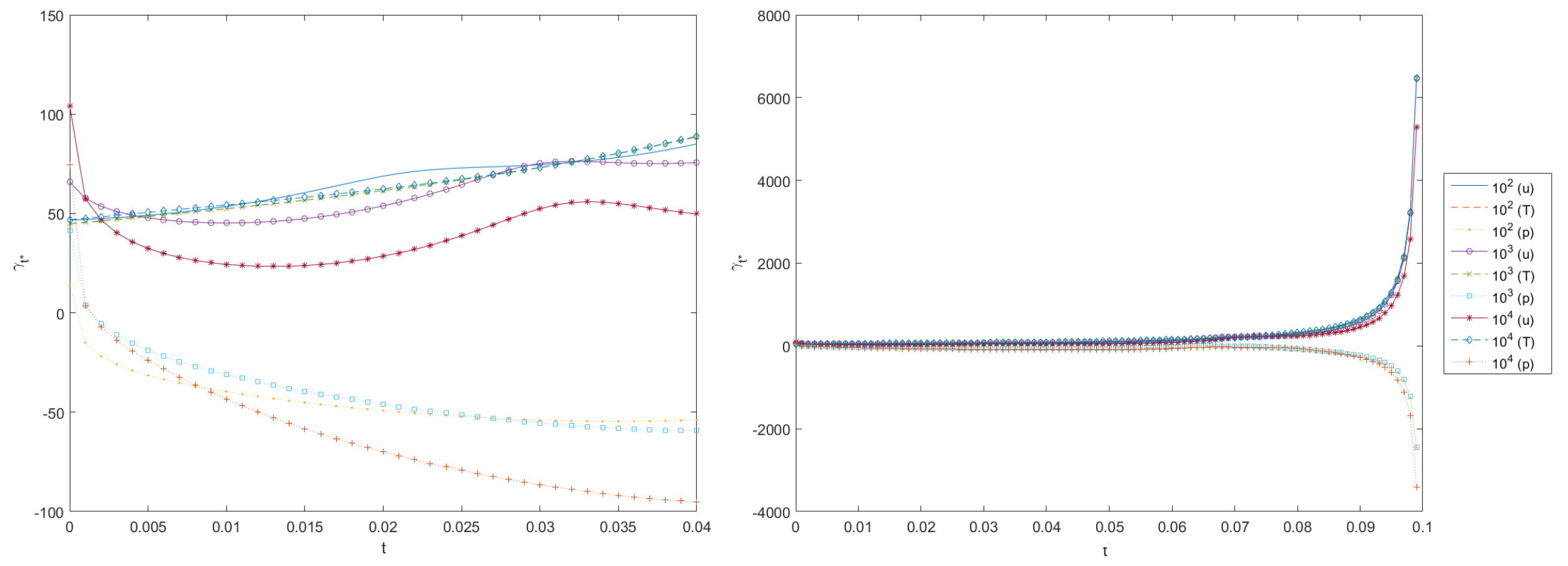}
 	\caption{$\gamma_{t^{\ast}}$ vs $t$ (right) zoomed in (left).}\label{figure=lyapunov}
 \end{figure}

\end{document}